\providecommand{\U}[1]{\protect\rule{.1in}{.1in}}
\newtheorem{theorem}{Theorem}
\newtheorem{corollary}[theorem]{Corollary}
\newtheorem{lemma}[theorem]{Lemma}
\newtheorem{proposition}[theorem]{Proposition}
\newenvironment{proof}[1][Proof]{\noindent\textit{#1.} }{}
\begin{document}

\title{Metric Double Complements of Convex Sets}
\author{Douglas S. Bridges}
\maketitle

\begin{abstract}%
\noindent
In constructive mathematics the metric complement of a subset $S$ of a metric
space $X$ is the set $-S$ of points in $X$ that are bounded away from $S$. In
this note we discuss, within Bishop's constructive mathematics, the connection
between the metric double complement, $-(-K)$, and the logical double
complement, $\lnot\lnot K$, where $K$ is a convex subset of a normed linear
space $X$. In particular, we prove that if $K$ has inhabited interior, then
$-(-K)=\left(  \lnot\lnot K\right)  ^{\circ}$, that the hypothesis of
inhabited interior can be dropped\ in the finite-dimensional case, and that we
cannot constructively replace $(\lnot\lnot K)^{\circ}$ by $K^{\circ}$ in these results.

\end{abstract}%

\normalfont\sf

\section{Preliminaries}%

\noindent
In this article we discuss, within Bishop's constructive mathematical
framework (\textbf{BISH)},\footnote{%
\normalfont\sf
Very roughly, \textbf{BISH} is mathematics carried out using intuitionistic
logic and an appropriate set- or type-theoretic framework \cite{Handbook}.}
double complements of convex sets in a normed linear space. We assume that the
reader has access to, or some familiarity with, the constructive theory of
metric and normed linear spaces, as can be found in
\cite{Bishop,Handbook,BVtech}. However, it may help to put on record a few
elementary notions in constructive analysis.

If $\left(  X,\rho\right)  $ is a metric space,\footnote{%
\normalfont\sf
We use $\rho$ to denote the metric on any metric space.} a point $x\ $is
\textbf{bounded away} from a subset $S$ if there exists $r>0$ such that
$\rho(x,y)\geq r$ for all $y\in S$. The set of points that are bounded away
from $S$ is an open set called the \textbf{metric complement }of $S$ in $X$
and is denoted by $-S$. The\textbf{\ metric double complement} of $S$ is
$-(-S)$. If $S$ is \textbf{located} in $X$----that is,%
\[
\rho(x,S)\equiv\inf\left\{  \left\Vert x-y\right\Vert :y\in S\right\}
\]
exists for each $x\in X\,$---then $-S=\left\{  x\in X:\rho(x,S)>0\right\}  $.
The \textbf{logical complement }of $S$ in $X$ is\textbf{\ }%
\[
\lnot S\equiv\left\{  x\in X:\forall y\in S(x\notin y)\right\}
\]
and the \textbf{logical double complement} of $S$ is $\lnot(\lnot S)$, also
written $\lnot\lnot S$. There is yet another complement to consider: the
\textbf{strong complement }of $S$ is%
\[%
\mathord{\sim}%
S\equiv\left\{  x\in X:\forall y\in S(x\neq y)\right\}  ,
\]
where $x\neq y$ means $\rho(x,y)>0$ (a condition constructively stronger than
$\lnot(x=y))$; the \textbf{strong double complement} of $S$ is, as expected, $%
\mathord{\sim}%
(%
\mathord{\sim}%
S)$. Note that $-S=-\overline{S}\subset%
\mathord{\sim}%
S\ $and that $S^{\circ}\subset-(-S)$, where $\overline{S}$ and $S^{\circ}$
denote, respectively, the closure and interior of $S$.

We have little to say here about strong complements, our main concern being
the connection between$\ -(-S)$, $\left(  \lnot\lnot S\right)  ^{\circ}$, and
$S^{\circ}$ in the case where $S$ is convex.\footnote{%
\normalfont\sf
Of course classically (that is, with classical logic) $\lnot\lnot S=S$ for any
$S$. But Proposition \ref{oct03p2} below shows that this identity cannot, in
general, be proved constructively even when $S$ is a convex subset of
$\mathbf{R}$.} However, before looking at convex sets, we gather here a number
of elementary results about complements in metric spaces.

\begin{lemma}
\label{oct14l1}Let $S$ and $T$ be subsets of a metric space $X$ such that
$S\subset\lnot T$ and $S$ is open. Then $S\subset-T$.
\end{lemma}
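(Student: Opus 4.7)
The plan is to use openness of $S$ to convert the pointwise inequality $x \neq y$ (available from $S \subset \lnot T$) into the uniform inequality $\rho(x,y) \geq r$ that defines membership in $-T$.

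First I would fix an arbitrary $x \in S$. Since $S$ is open, there exists $r > 0$ such that the open ball $B(x,r) = \{z \in X : \rho(x,z) < r\}$ is contained in $S$. I claim that $\rho(x,y) \geq r$ for every $y \in T$, which immediately yields $x \in -T$ and hence the desired inclusion $S \subset -T$.

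To verify the claim, fix $y \in T$. In BISH the inequality $\rho(x,y) \geq r$ is just $\lnot(\rho(x,y) < r)$, so it suffices to derive a contradiction from the assumption $\rho(x,y) < r$. Under that assumption, $y \in B(x,r) \subset S \subset \lnot T$, so $y \in \lnot T$; but $y \in T$, so applying the defining condition of $\lnot T$ with the choice $y \in T$ inside the universal quantifier gives $\lnot(y = y)$, contradicting $y = y$. Thus $\lnot(\rho(x,y) < r)$, i.e. $\rho(x,y) \geq r$, as required.

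There is really no serious obstacle here: the only point needing a little care is the constructive reading of $\rho(x,y) \geq r$ as the negation $\lnot(\rho(x,y) < r)$, which is exactly what makes the proof-by-contradiction step legitimate in \textbf{BISH}. Openness of $S$ is essential, as it supplies the uniform radius $r$ that upgrades the logical separation provided by $\lnot T$ to the metric separation demanded by $-T$.
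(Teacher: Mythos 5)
Your proof is correct and follows essentially the same route as the paper: fix $x\in S$, extract $r>0$ with $B(x,r)\subset S\subset\lnot T$, and conclude $\rho(x,y)\geq r$ for all $y\in T$. The paper leaves the last step implicit, whereas you rightly spell out that $\rho(x,y)\geq r$ is the negative statement $\lnot(\rho(x,y)<r)$, which is exactly what licenses the contradiction argument in \textbf{BISH}.
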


\begin{proof}
Given $x\in S$, choose $r>0$ such that $B(x,r)\subset S$. Then $B(x,r)\subset
\lnot T$, so $\rho(x,y)\geq r$ for each $y\in T$ and therefore $x\in-T$.%
\hfill
$\square$
\end{proof}

\begin{lemma}
\label{oct03l2}Let $S$ be a subset of a metric space $X$. Then $-(-S)=\left(
\lnot(-S)\right)  ^{\circ}.$
\end{lemma}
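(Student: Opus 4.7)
The plan is to prove the two inclusions $-(-S)\subseteq(\lnot(-S))^{\circ}$ and $(\lnot(-S))^{\circ}\subseteq -(-S)$ separately, using only the definitions together with Lemma \ref{oct14l1} for one of the directions.

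For the forward inclusion, I would take $x\in-(-S)$ and choose $r>0$ such that $\rho(x,y)\geq r$ for every $y\in -S$. Any $y\in B(x,r)$ satisfies $\rho(x,y)<r$, so assuming $y\in -S$ yields a contradiction; hence $y\in\lnot(-S)$. Thus $B(x,r)\subseteq\lnot(-S)$, and so $x\in(\lnot(-S))^{\circ}$. This step is routine and uses nothing beyond the definitions; the only subtlety is that we need constructively valid negation, but the contradiction $\rho(x,y)\geq r$ together with $\rho(x,y)<r$ gives $\lnot(y\in -S)$ directly.

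For the reverse inclusion, I would take $x\in(\lnot(-S))^{\circ}$ and pick $r>0$ with $B(x,r)\subseteq\lnot(-S)$. Since $B(x,r)$ is open and is contained in $\lnot(-S)$, Lemma \ref{oct14l1} (applied with the roles of $S$ and $T$ in that lemma played by $B(x,r)$ and $-S$) yields $B(x,r)\subseteq-(-S)$. In particular $x\in -(-S)$, completing the proof.

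I do not anticipate a real obstacle here. The forward direction is a direct unpacking of what ``bounded away from'' means, while the reverse direction is a one-line application of the preceding lemma, which was presumably recorded in the paper precisely to make arguments like this frictionless.
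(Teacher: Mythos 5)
Your proof is correct and follows essentially the same route as the paper: both establish the forward inclusion from the openness of $-(-S)$ (which you unpack by exhibiting the ball $B(x,r)$ directly) and both obtain the reverse inclusion from Lemma \ref{oct14l1}. The only cosmetic difference is that you apply that lemma to the ball $B(x,r)$ rather than to the open set $\left(\lnot(-S)\right)^{\circ}$ itself, which changes nothing of substance.
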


\begin{proof}
Since $-(-S)\subset\lnot(-S)$ and metric complements are open, $-(-S)\subset
\left(  \lnot(-S)\right)  ^{\circ}$. On the other hand, $\left(
\lnot(-S)\right)  ^{\circ}\subset\lnot(-S)$, so by Lemma \ref{oct14l1},
$\left(  \lnot(-S)\right)  ^{\circ}\subset-(-S).$%
\hfill
$\square$
\end{proof}

\begin{lemma}
\label{mar21l0}Let $X$ be a metric space, and $S$ a subset of $X$ such that
$-(-S)\subset\lnot\lnot S$. Then $-(-S)=\left(  \lnot\lnot S\right)  ^{\circ}$.
\end{lemma}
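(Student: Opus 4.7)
The plan is to lean on Lemma \ref{oct03l2}, which already identifies $-(-S)$ with $(\lnot(-S))^{\circ}$; granted this, the desired equality $-(-S)=(\lnot\lnot S)^{\circ}$ splits cleanly into two inclusions.

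For the forward inclusion $-(-S)\subset(\lnot\lnot S)^{\circ}$, the hypothesis gives $-(-S)\subset\lnot\lnot S$, and the metric complement $-(-S)$ is open. Since $(\lnot\lnot S)^{\circ}$ is the largest open subset of $\lnot\lnot S$, the inclusion is immediate. This is the only place where the standing hypothesis enters the argument.

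For the reverse inclusion $(\lnot\lnot S)^{\circ}\subset-(-S)$, I would record the elementary observation that $-S\subset\lnot S$: a point bounded away from $S$ cannot belong to $S$. Contravariance of $\lnot$ then yields $\lnot\lnot S\subset\lnot(-S)$, so taking interiors and applying Lemma \ref{oct03l2} gives
\[
(\lnot\lnot S)^{\circ}\subset(\lnot(-S))^{\circ}=-(-S),
\]
as required. I do not anticipate a real obstacle here: once Lemma \ref{oct03l2} is in hand, the proof is a brief piece of formal bookkeeping with the two lattice-theoretic facts that interiors are monotone and that $A\subset B$ implies $\lnot B\subset\lnot A$.
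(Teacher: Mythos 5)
Your proposal is correct and takes essentially the same approach as the paper: both prove the forward inclusion from the openness of $-(-S)$ together with the hypothesis, and both reverse inclusions rest on the contravariance fact $\lnot\lnot S\subset\lnot(-S)$. The only cosmetic difference is that you finish via monotonicity of interiors and Lemma \ref{oct03l2}, whereas the paper applies Lemma \ref{oct14l1} directly to the open set $\left(\lnot\lnot S\right)^{\circ}$ with $T=-S$; since Lemma \ref{oct03l2} is itself a one-line consequence of Lemma \ref{oct14l1}, the two arguments have identical content.
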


\begin{proof}
Since $-(-S)$ is open, $-(-S)\subset\left(  \lnot\lnot S\right)  ^{\circ}$. On
the other hand, $\left(  \lnot\lnot S\right)  ^{\circ}\subset\lnot\lnot
S\subset\lnot(-S)$, so by Lemma \ref{oct14l1}, $\left(  \lnot\lnot S\right)
^{\circ}\subset-(-S)$.%
\hfill
$\square$
\end{proof}

\begin{lemma}
\label{mar21l2}If $S$ is a located subset of a metric space $X$, then
$\lnot(-S)\subset\overline{S}$.
\end{lemma}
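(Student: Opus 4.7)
The plan is to unwind the definitions and use the characterisation of $-S$ for a located set. Suppose $x\in\lnot(-S)$; since $S$ is located, the excerpt tells us that $-S=\{z\in X:\rho(z,S)>0\}$, so the hypothesis $\lnot(x\in -S)$ translates directly into $\lnot(\rho(x,S)>0)$.

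From this I would extract $\rho(x,S)=0$. The key point is that $\rho(x,S)$ is a nonnegative real, so $\rho(x,S)\ge 0$; combined with $\lnot(\rho(x,S)>0)$, which in Bishop's framework is precisely $\rho(x,S)\le 0$, we get $\rho(x,S)=0$. This step is constructively valid because the order $\le$ on $\mathbf{R}$ is defined as the negation of the strict order, so no appeal to LEM or Markov's principle is required.

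Finally, from $\rho(x,S)=\inf\{\rho(x,y):y\in S\}=0$ and the standard (constructive) characterisation of the infimum, for each $\varepsilon>0$ there exists $y\in S$ with $\rho(x,y)<\varepsilon$. Hence $x$ is in the closure $\overline{S}$, which completes the proof.

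There is no real obstacle here; the only thing to be a little careful about is the move from $\lnot(\rho(x,S)>0)$ to $\rho(x,S)=0$, which needs locatedness (so that $\rho(x,S)$ exists as an element of $\mathbf{R}$) together with the Bishop-style definition of $\le$. Without locatedness one cannot even form the distance, which is why the hypothesis is essential.
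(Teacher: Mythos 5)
Your proof is correct and follows essentially the same route as the paper's: locatedness gives $-S=\{z:\rho(z,S)>0\}$, so $x\in\lnot(-S)$ yields $\lnot(\rho(x,S)>0)$, whence $\rho(x,S)=0$ and $x\in\overline{S}$; you have merely made explicit the constructive steps (that $\lnot(a>0)$ with $a\geq 0$ gives $a=0$, and the approximate-infimum property) that the paper compresses into one line. The only small imprecision is your remark that $\leq$ is \emph{defined} as the negation of $<$; in Bishop's framework $\leq$ is usually defined positively and the equivalence $\lnot(a>b)\Rightarrow a\leq b$ is a theorem, but this does not affect the validity of your argument.
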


\begin{proof}
Since $S$ is located, for each $x\in\lnot(-S)$ we have $\lnot\left(
\rho(x,S)>0\right)  $, so $\rho(x,S)=0$ and therefore $x\in\overline{S}$.%
\hfill
$\square$
\end{proof}

\begin{lemma}
\label{sept27l2}Let $S$ be a closed located subset of a metric space $X$. Then
$-(-S)=-(%
\mathord{\sim}%
S)=-(\lnot S)=S^{\circ}$.
\end{lemma}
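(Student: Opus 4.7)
The plan is to exhibit a cyclic chain of inclusions
\[
S^{\circ}\subset-(\lnot S)\subset-({\sim}S)\subset-(-S)\subset S^{\circ},
\]
which forces all four sets to coincide. The chain then delivers all three equalities in the statement simultaneously.

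First I would establish the middle portion $-(\lnot S)\subset-({\sim}S)\subset-(-S)$ from the purely formal containments $-S\subset{\sim}S\subset\lnot S$: a point bounded away from $S$ is certainly apart from (in the sense of $\neq$) every element of $S$, and apartness in turn precludes equality; applying the metric complement reverses inclusions, since being bounded away from a larger set is a stronger requirement. The remaining inclusion $S^{\circ}\subset-(-S)$ of the chain is already recorded in the preliminaries.

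Next, $S^{\circ}\subset-(\lnot S)$ is straightforward: given $x\in S^{\circ}$, pick $r>0$ with $B(x,r)\subset S$; then no $y\in\lnot S$ can lie in $B(x,r)$, so $\rho(x,y)\geq r$ for every such $y$, whence $x\in-(\lnot S)$.

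The heart of the proof is the final inclusion $-(-S)\subset S^{\circ}$, and it is the only place the two hypotheses on $S$ come into play. Here I would chain together Lemmas \ref{oct03l2} and \ref{mar21l2}: the former gives $-(-S)=\left(\lnot(-S)\right)^{\circ}$; locatedness of $S$ feeds into the latter to give $\lnot(-S)\subset\overline{S}$; and closedness of $S$ reduces $\overline{S}$ to $S$. Taking interiors of $\lnot(-S)\subset S$ then yields $-(-S)\subset S^{\circ}$, closing the cycle.

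The main obstacle is this last inclusion, but with the earlier lemmas in hand it is a three-line chase. The subtlety worth flagging is that both hypotheses are genuinely used: locatedness to invoke Lemma \ref{mar21l2}, and closedness to collapse the closure that this lemma produces.
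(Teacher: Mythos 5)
Your proof is correct and takes essentially the same route as the paper's: the same decisive step $-(-S)=\left(\lnot(-S)\right)^{\circ}\subset\overline{S}^{\circ}=S^{\circ}$ via Lemmas \ref{oct03l2} and \ref{mar21l2} plus closedness, the same inclusion $S^{\circ}\subset-(\lnot S)$ (you inline the openness argument that the paper cites as Lemma \ref{oct14l1}), and the same antitone inclusions $-(\lnot S)\subset-(\mathord{\sim}S)\subset-(-S)$, merely repackaged as one cyclic chain. The only blemish is your stray remark that $S^{\circ}\subset-(-S)$ is a link of the chain --- it is not, and it is never needed --- but this is harmless since you separately establish all four inclusions that are.
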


\begin{proof}
Since $S^{\circ}\subset\lnot\lnot S$, Lemma \ref{oct14l1} yields $S^{\circ
}\subset-(\lnot S)$. From this, Lemma \ref{oct03l2}, and Lemma \ref{mar21l2}
we obtain%
\[
-(-S)=\left(  \lnot(-S)\right)  ^{\circ}\subset\overline{S}^{\circ}=S^{\circ
}\subset-(\lnot S)\text{.}%
\]
and therefore $-(-S)=-(\lnot S)=S^{\circ}$. Moreover, since $-S\subset%
\mathord{\sim}%
S\subset\lnot S$, we have%
\[
-\left(  \lnot S\right)  \subset-(%
\mathord{\sim}%
S)\subset-\left(  -S\right)  =-(\lnot S)\
\]
and therefore $-(-S)=-(%
\mathord{\sim}%
S)=-(\lnot S)$.%
\hfill
$\square$
\end{proof}

\begin{lemma}
\label{oct15l1}Let $X$ be a metric space, $S$ a closed located subset of $X$,
and $K$ a subset of $S$ such that $S\cap\lnot K=\varnothing$. Then $S^{\circ
}=-(-K)$.
\end{lemma}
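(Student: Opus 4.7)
The plan is to prove the equality by establishing the two inclusions separately, exploiting Lemma \ref{sept27l2} on the closed located set $S$ in one direction, and routing the other direction through $\lnot\lnot K$ via Lemma \ref{oct14l1}.

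For the inclusion $-(-K)\subset S^{\circ}$ I would argue as follows. Since $K\subset S$, any point bounded away from $S$ is bounded away from $K$, so $-S\subset -K$ and hence $-(-K)\subset -(-S)$. Because $S$ is closed and located, Lemma \ref{sept27l2} gives $-(-S)=S^{\circ}$, and so $-(-K)\subset S^{\circ}$. This half is essentially immediate.

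The reverse inclusion $S^{\circ}\subset -(-K)$ is where the hypothesis $S\cap\lnot K=\varnothing$ does the real work. I would first observe that for any $x\in S$ the hypothesis forbids $x\in\lnot K$, which is exactly $\lnot\lnot(x\in K)$; in other words $S\subset\lnot\lnot K$. Next, since $-K\subset\lnot K$, contraposition gives $\lnot\lnot K\subset\lnot(-K)$. Combining, $S^{\circ}\subset S\subset\lnot(-K)$, and since $S^{\circ}$ is open, Lemma \ref{oct14l1} (applied with the roles of $S,T$ there being $S^{\circ}$ and $-K$) yields $S^{\circ}\subset -(-K)$.

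The main obstacle, such as it is, lies in extracting $S\subset\lnot\lnot K$ from the hypothesis in a constructively clean way: one must be careful that $S\cap\lnot K=\varnothing$ is interpreted pointwise as $\forall x\in S,\lnot(x\in\lnot K)$, which is constructively the same as $\forall x\in S,\lnot\lnot(x\in K)$. Once this step is in place, the rest is a combination of the trivial monotonicity of $-(\,\cdot\,)$ and a single appeal to Lemma \ref{oct14l1}, with Lemma \ref{sept27l2} handling the other inclusion.
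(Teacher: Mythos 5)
Your proposal is correct and follows essentially the same route as the paper: one inclusion via $S^{\circ}\subset S\subset\lnot(-K)$ and Lemma \ref{oct14l1}, the other via monotonicity $-(-K)\subset-(-S)$ and Lemma \ref{sept27l2}. Your explicit passage through $\lnot\lnot K$ merely spells out the step $S\subset\lnot(-K)$ that the paper leaves implicit.
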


\begin{proof}
First we have $S^{\circ}\subset S\subset\lnot(-K)$, so by Lemma \ref{oct14l1},
$S^{\circ}\subset-(-K)$. On the other hand, $-(-K)\subset-(-S)=S^{\circ}$, the
equality following from Lemma \ref{sept27l2},
\hfill
$\square$
\end{proof}

\section{Convex sets}

In this section we focus on double complements in convex subsets of a normed
linear space. But first we derive some important properties of interiors of
such sets.\footnote{%
\normalfont\sf
Several of our proofs may be well known, but we choose to include them for
completeness of exposition.}

\begin{lemma}
\label{oct03l1}Let $x,y$ be points in a normed space $X$, let $z=\left(
1-\lambda\right)  x+\lambda y$, where $0\leq\lambda\leq1$, and let $r>0$. If
$\zeta\in B(z,r)$, then $\zeta=(1-\lambda)\xi+\lambda\eta$, where $\xi
=x+\zeta-z\in B(x,r)$ and $\eta=y+\zeta-z\in\eta\in B(y,r)$.
\end{lemma}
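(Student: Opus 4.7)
The plan is to prove the lemma by a direct algebraic verification in the normed space; nothing more than translation invariance of the norm is needed. Given $\zeta\in B(z,r)$, I would set $\xi:=x+(\zeta-z)$ and $\eta:=y+(\zeta-z)$ exactly as prescribed in the statement, and then carry out two short checks.

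The first step is to locate $\xi$ and $\eta$ in the appropriate balls. Since $\xi-x=\zeta-z=\eta-y$, translation invariance of the norm gives $\|\xi-x\|=\|\eta-y\|=\|\zeta-z\|<r$, placing $\xi\in B(x,r)$ and $\eta\in B(y,r)$ simultaneously.

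The second step is to confirm that the convex combination $(1-\lambda)\xi+\lambda\eta$ recovers $\zeta$. Expanding and using the fact that the scalar coefficients sum to $1$, the translation $\zeta-z$ passes through unchanged, and combined with the defining identity $z=(1-\lambda)x+\lambda y$ the sum collapses to $z+(\zeta-z)=\zeta$.

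I do not anticipate any substantive obstacle: the lemma is essentially the observation that open balls in a normed space are translates of one another and that this translate structure is respected by convex combinations whose coefficients sum to $1$. The boundary values $\lambda\in\{0,1\}$ go through with the same formulas, and the argument is constructively unproblematic because every step is an equality manipulation with the norm, involving no case analysis or appeal to excluded middle.
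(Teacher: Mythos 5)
Your proof is correct and is essentially identical to the paper's own argument: both define $\xi=x+\zeta-z$ and $\eta=y+\zeta-z$, note that $\|\xi-x\|=\|\eta-y\|=\|\zeta-z\|<r$, and verify that $(1-\lambda)\xi+\lambda\eta=\zeta$ because the coefficients sum to $1$. No differences worth noting.
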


\begin{proof}
Let $\zeta,\xi,$ and $\eta$ be as in the hypotheses. Then $\xi\in B(x,r)$,
$\eta\in B(y,r)$, and $\left(  1-\lambda\right)  \xi+\lambda\eta=\left(
1-\lambda\right)  x+\lambda y+\zeta-z=\zeta$.%
\hfill
$\square$
\end{proof}

%

\medskip
The next lemma is a rewording of \cite[5.1.1]{BVtech}.

\begin{lemma}
\label{oct06l1}Let $K$ be a convex subset of a normed space $X$, let $x\in
K^{\circ}$, and let $r>0$ be such that $B(x,r)\subset K$. Let $y\neq x$,
$0<\lambda<1$, and $z=(1-\lambda)x+\lambda y$. If $B(y,(1-\lambda)r)$
intersects $K$, then $B(z,\left(  1-\lambda\right)  ^{2}r)\subset K$.
\end{lemma}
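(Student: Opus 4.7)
The plan is to use the convexity of $K$ to write an arbitrary point $\zeta\in B(z,(1-\lambda)^2r)$ as a convex combination of a point in the known-to-be-inside ball $B(x,r)$ and the witness point for the intersection $B(y,(1-\lambda)r)\cap K$. More precisely, I would fix some $y_{0}\in B(y,(1-\lambda)r)\cap K$, and given $\zeta\in B(z,(1-\lambda)^{2}r)$, I would define
\[
\xi \;=\; \frac{\zeta-\lambda y_{0}}{1-\lambda},
\]
so that by construction $\zeta=(1-\lambda)\xi+\lambda y_{0}$.

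The main task is then to verify that $\xi\in B(x,r)$, for then $\xi\in K$, and convexity together with $y_{0}\in K$ forces $\zeta\in K$. Using $(1-\lambda)x=z-\lambda y$, a direct manipulation gives
\[
(1-\lambda)(\xi-x) \;=\; (\zeta-z)+\lambda(y-y_{0}),
\]
and the triangle inequality bounds the right-hand side by $(1-\lambda)^{2}r+\lambda(1-\lambda)r=(1-\lambda)r$, so $\|\xi-x\|<r$ as required. The spirit of this decomposition is exactly that of Lemma \ref{oct03l1}, only shifted so that one endpoint is corrected from $y$ to the nearby point $y_{0}\in K$; the price of that correction is the extra factor $(1-\lambda)$ in the radius, which is why the conclusion has $(1-\lambda)^{2}r$ rather than $(1-\lambda)r$.

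There is no real obstacle here, as the argument is an estimate plus a single application of convexity; the only thing one must be careful about constructively is that the hypothesis ``$B(y,(1-\lambda)r)$ intersects $K$'' genuinely furnishes a witness $y_{0}$ that can be used in the convex combination, rather than merely the non-emptiness of the intersection in a classical sense. Once that witness is in hand, the rest is arithmetic, and no appeal to a limit, to location, or to the interior beyond the given ball $B(x,r)\subset K$ is needed.
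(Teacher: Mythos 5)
Your proof is correct. Note that the paper itself does not prove this lemma: it is stated as a rewording of Lemma 5.1.1 of the cited reference \cite{BVtech}, so your argument serves as a self-contained substitute for that citation rather than a parallel to a proof appearing in the paper. The computation checks out: with a witness $y_{0}\in B(y,(1-\lambda)r)\cap K$ (and you are right that in \textbf{BISH} ``intersects'' means the intersection is inhabited, so such a witness is genuinely available) and $\xi=(\zeta-\lambda y_{0})/(1-\lambda)$, the identity $(1-\lambda)(\xi-x)=(\zeta-z)+\lambda(y-y_{0})$ follows from $(1-\lambda)x=z-\lambda y$, and the estimate
\[
\left\Vert (\zeta-z)+\lambda(y-y_{0})\right\Vert <(1-\lambda)^{2}r+\lambda(1-\lambda)r=(1-\lambda)r
\]
gives $\left\Vert \xi-x\right\Vert <r$, so $\xi\in B(x,r)\subset K$ and $\zeta=(1-\lambda)\xi+\lambda y_{0}\in K$ by convexity, all of it constructively unobjectionable. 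Two small remarks: your argument never uses the hypothesis $y\neq x$, so you have in fact proved a marginally stronger statement (that hypothesis matters only for how the lemma is invoked elsewhere, e.g.\ in Corollary \ref{oct06c1}); and your decomposition is exactly the translation trick of Lemma \ref{oct03l1} with the endpoint $y$ replaced by the witness $y_{0}$, the factor $(1-\lambda)$ lost in that replacement being precisely why the conclusion has radius $(1-\lambda)^{2}r$ rather than $(1-\lambda)r$.
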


\begin{proposition}
\label{oct03p1}Let $K$ be a convex subset of a normed space $X$. Then
$K^{\circ}$ is convex.
\end{proposition}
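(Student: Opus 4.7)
The plan is to apply Lemma \ref{oct03l1} directly: it provides, for any ball around a convex combination $z=(1-\lambda)x+\lambda y$, an explicit decomposition of each point of that ball into a convex combination of points in corresponding balls around $x$ and $y$. Since $K^\circ$ is defined pointwise as having ball-neighbourhoods contained in $K$, convexity of $K$ will transfer these ball-neighbourhoods to $z$ after taking the common radius.

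Concretely, I would start with arbitrary $x,y\in K^\circ$ and $\lambda\in[0,1]$, set $z=(1-\lambda)x+\lambda y$, and pick $r_1,r_2>0$ with $B(x,r_1)\subset K$ and $B(y,r_2)\subset K$. Setting $r=\min\{r_1,r_2\}>0$, I have $B(x,r)\subset K$ and $B(y,r)\subset K$. Given any $\zeta\in B(z,r)$, Lemma \ref{oct03l1} supplies $\xi\in B(x,r)\subset K$ and $\eta\in B(y,r)\subset K$ with $\zeta=(1-\lambda)\xi+\lambda\eta$; convexity of $K$ then yields $\zeta\in K$. Hence $B(z,r)\subset K$, showing $z\in K^\circ$.

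Since Lemma \ref{oct03l1} works uniformly for all $\lambda\in[0,1]$ without requiring $x\neq y$ or strict inequalities on $\lambda$, no case analysis is needed, and no constructive subtleties arise: the argument is fully uniform and does not invoke the deeper Lemma \ref{oct06l1}. In fact, there is essentially no obstacle; the only thing to be careful about is taking a single $r$ that works for both $x$ and $y$ before invoking the decomposition lemma, so that the same ball-radius bound can be used on both sides of the convex combination.
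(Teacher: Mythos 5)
Your proof is correct and follows essentially the same route as the paper: both invoke Lemma \ref{oct03l1} to decompose each point of $B(z,r)$ as a convex combination of points in $B(x,r)\subset K$ and $B(y,r)\subset K$, then apply convexity of $K$. The only cosmetic difference is that you explicitly take $r=\min\{r_1,r_2\}$, whereas the paper simply asserts a common $r$ at the outset.
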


\begin{proof}
Let $x,y\in K^{\circ}$. There exists $r>0$ such that $B(x,r)\subset K^{\circ}$
and $B(y,r)\subset K^{\circ}$. Let $0\leq\lambda\leq1$ and $z=(1-\lambda
)x+\lambda y$. By Lemma \ref{oct03l1}, for each $\zeta\in B(z,r)$ there exist
$\xi\in B(x,r)\subset K$ and $\eta\in B(y,r)\subset K$ such that
$\zeta=\left(  1-\lambda\right)  \xi+\lambda\eta$, which belongs to the convex
set $K$. Thus $B(z,r)\subset K$ and therefore $z\in K^{\circ}$.%
\hfill
$\square$
\end{proof}

\begin{proposition}
\label{oct06p1}Let $K$ be a convex set with inhabited interior in a normed
linear space $X$. Then $K^{\circ}$ is dense in $\overline{K}$.
\end{proposition}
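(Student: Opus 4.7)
The plan is as follows. Fix $x \in K^{\circ}$ and $r > 0$ with $B(x,r) \subset K$. To establish density, take $y \in \overline{K}$ and $\epsilon > 0$, and produce $z \in K^{\circ}$ with $\|z - y\| < \epsilon$. The natural candidate is a point on the segment from $x$ to $y$: set $z_{\lambda} = (1-\lambda)x + \lambda y$ for $\lambda$ close to $1$; then $\|z_{\lambda} - y\| = (1-\lambda)\|x - y\|$, which can be made smaller than $\epsilon$ by suitable choice of $\lambda < 1$. The remaining task is to verify that $z_{\lambda} \in K^{\circ}$, and this is where I would invoke Lemma \ref{oct06l1}.

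The chief constructive obstacle is that Lemma \ref{oct06l1} has the hypothesis $y \neq x$, which I cannot assume a priori. To handle this, I would apply cotransitivity of $<$ to the positive number $r$ and the real number $\|y - x\|$, obtaining the disjunction ``$\|y - x\| < r$ or $\|y - x\| > 0$''.

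In the first case, $y$ lies in the open ball $B(x, r) \subset K$, and in fact the triangle inequality gives $B(y, r - \|y - x\|) \subset B(x, r) \subset K$, so $y \in K^{\circ}$ and we may take $z = y$. In the second case, $y \neq x$ and Lemma \ref{oct06l1} applies: choose $\lambda \in (0,1)$ with $(1-\lambda)\|x - y\| < \epsilon$; since $y \in \overline{K}$, the ball $B(y, (1-\lambda)r)$ meets $K$, and the lemma then yields $B(z_{\lambda}, (1-\lambda)^{2} r) \subset K$, so $z_{\lambda} \in K^{\circ}$ is the required approximation. The routine verifications left are the triangle-inequality inclusion in the first case and the arithmetic choice of $\lambda$ in the second.
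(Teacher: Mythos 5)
Your proof is correct, and it takes a genuinely different route through the key step. The paper first reduces the problem to showing that $K^{\circ}$ is dense in $K$ (density of $K$ in $\overline{K}$ then finishes the job), and for $y\in K$ it verifies by a self-contained computation that $z=(1-\lambda)x_{0}+\lambda y$ satisfies $B(z,(1-\lambda)r)\subset K$: each $\zeta$ in that ball is written as $(1-\lambda)\xi+\lambda y$ with $\xi\in B(x_{0},r)\subset K$, and convexity applies precisely because $y$ itself lies in $K$. That computation genuinely needs $y\in K$, which is why the paper makes the reduction first. You instead handle $y\in\overline{K}$ directly and let Lemma \ref{oct06l1} do the work: its hypothesis asks only that $B(y,(1-\lambda)r)$ intersect $K$, which is exactly what membership in the constructive closure supplies, and its conclusion $B(z_{\lambda},(1-\lambda)^{2}r)\subset K$ gives $z_{\lambda}\in K^{\circ}$. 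Your cotransitivity split ($\|y-x\|<r$ or $\|y-x\|>0$) is the same case analysis the paper performs, and your choice of $\lambda$ and the triangle-inequality inclusion in the first case are indeed routine. The trade-off: the paper's proof is self-contained, never invoking Lemma \ref{oct06l1} (which is imported from the literature and first deployed only in Corollary \ref{oct06c1}, in a step that closely mirrors your second case), whereas your argument is shorter and dispenses with the reduction to density in $K$, at the cost of leaning on the cited lemma as a black box.
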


\begin{proof}
It will suffice to prove that $K^{\circ}$ is dense in $K$. Fix $x_{0}\in
K^{\circ}$ and $r>0$ such that $B(x_{0},r)\subset K^{\circ}$, and consider any
$y\in K$. Either $\left\Vert x_{0}-y\right\Vert <r$ and therefore $y\in
K^{\circ}$, or else, as we may suppose, $x_{0}\neq y$. Given $\varepsilon$
with $0<\varepsilon<2\left\Vert x_{0}-y\right\Vert $, we see that%
\[
0<\lambda\equiv1-\frac{\varepsilon}{2\left\Vert x_{0}-y\right\Vert }<1\text{.}%
\]
Let $z=(1-\lambda)x_{0}+\lambda y$, $\zeta\in B(z,\left(  1-\lambda\right)
r)$, and%
\[
\xi=\frac{1}{1-\lambda}\zeta-\frac{\lambda}{1-\lambda}y.
\]
Then%
\[
x_{0}=\frac{1}{1-\lambda}z-\frac{\lambda}{1-\lambda}y,
\]
so%
\[
\left\Vert \xi-x_{0}\right\Vert =\frac{1}{1-\lambda}\left\Vert \zeta
-z\right\Vert <r
\]
and therefore $\xi\in K$. Since $\zeta=(1-\lambda)\xi+\lambda y$, it follows
that $\zeta\in K$. Hence $B(z,(1-\lambda)r)\subset K$ and so $z\in K^{\circ}$.
On the other hand, $\left\Vert y-z\right\Vert =\left(  1-\lambda\right)
\left\Vert x_{0}-y\right\Vert =\varepsilon/2$. Thus for each $y\in K$ and each
$\varepsilon>0$, there exists $z\in K^{\circ}$ such that $\left\Vert
y-z\right\Vert <\varepsilon$.%
\hfill
$\square$
\end{proof}

\begin{corollary}
\label{oct06c1}Let $K$ be a convex set in a normed space $X$. If $x\in
K^{\circ}$ and $y\in\overline{K}$, then $[x,y)\subset K^{\circ}$.
\end{corollary}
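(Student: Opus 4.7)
The plan is to show that for each $\lambda \in [0, 1)$, the point $z = (1-\lambda) x + \lambda y$ lies in $K^{\circ}$, by invoking Lemma \ref{oct06l1} after using the constructive cotransitivity of $<$ to side-step the degenerate cases that the lemma excludes.

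First I would fix $r > 0$ with $B(x, r) \subset K$. Noting that $\|z - x\| = \lambda \|y - x\|$, I would apply cotransitivity to $0 < r$: either $\|z - x\| < r$, in which case $z \in B(x, r)$ and the open ball $B(z, r - \|z - x\|)$ sits inside $B(x, r) \subset K$, so $z \in K^{\circ}$; or else $\|z - x\| > 0$, which I treat next.

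In the latter case, $\lambda \|y - x\| > 0$ together with $0 \leq \lambda \leq 1$ gives $\|y - x\| \geq \lambda \|y - x\| > 0$, so $y \neq x$, and then dividing $\lambda \|y-x\|$ by the now-positive $\|y-x\|$ yields $\lambda > 0$. Since $y \in \overline{K}$, the ball $B(y, (1-\lambda) r)$ meets $K$; Lemma \ref{oct06l1} then yields $B(z, (1-\lambda)^{2} r) \subset K$, so $z \in K^{\circ}$.

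The main obstacle is the familiar constructive one: we cannot decide whether $x = y$ or whether $\lambda = 0$, and Lemma \ref{oct06l1} requires both $y \neq x$ and $0 < \lambda < 1$. The cotransitivity split handles this cleanly, partitioning matters into ``$z$ is already inside the ball $B(x, r)$ around $x$'' and ``$\lambda$ and $\|y - x\|$ are simultaneously positive,'' the second of which puts us squarely in the hypotheses of Lemma \ref{oct06l1}. This mirrors the style of the constructive dichotomy used in the proof of Proposition \ref{oct06p1}.
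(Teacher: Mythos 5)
Your proof is correct and takes essentially the same route as the paper's: the identical cotransitivity dichotomy (either $z$ lands inside the ball about $x$, or $\Vert z-x\Vert>0$, forcing $y\neq x$ and $0<\lambda<1$), followed by an application of Lemma \ref{oct06l1}. The only divergence is that where the paper invokes Proposition \ref{oct06p1} to conclude that $B(y,(1-\lambda)r)$ meets $K^{\circ}$, you appeal directly to the definition of closure to get that it meets $K$ --- which is all Lemma \ref{oct06l1} actually requires, so your version is, if anything, a slight simplification.
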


\begin{proof}
Choose $r>0$ such that $B(x,r)\subset K^{\circ}$. Let $0\leq\lambda<1$ and
$z=\left(  1-\lambda\right)  x+\lambda y$. Either $\left\Vert z-x\right\Vert
<r$ and $z\in K^{\circ}$, or else $z\neq x$. In the latter case, $0<\lambda<1$
and%
\[
\left\Vert y-x\right\Vert =\left\Vert \left(  1-\frac{1}{\lambda}\right)
x+\frac{1}{\lambda}z-x\right\Vert =\frac{1}{\lambda}\left\Vert z-x\right\Vert
>0,
\]
so $y\neq x$. Since $y\in\overline{K}$, Proposition \ref{oct06p1} shows that
$B(y,\left(  1-\lambda\right)  r)$ intersects $K^{\circ}$; whence, by Lemma
\ref{oct06l1}, $B(z,\left(  1-\lambda\right)  ^{2}r)\subset K$ and therefore
$z\in K^{\circ}$.%
\hfill
$\square$
\end{proof}

\begin{corollary}
\label{oct07c2}Let $K$ be a convex set with inhabited interior in a normed
linear space $X$. Then $K^{\circ}=\overline{K}^{\circ}$.
\end{corollary}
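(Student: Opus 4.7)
The plan is to show both inclusions, with the nontrivial direction being $\overline{K}^{\circ}\subset K^{\circ}$, where Corollary \ref{oct06c1} does essentially all the work once we arrange $z\in\overline{K}^{\circ}$ to lie on a half-open segment $[x_{0},y)$ with $x_{0}\in K^{\circ}$ and $y\in\overline{K}$.

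First I would note that since $K\subset\overline{K}$, every ball inside $K$ lies inside $\overline{K}$, so $K^{\circ}\subset\overline{K}^{\circ}$; this needs no hypothesis.

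For the reverse inclusion, fix $x_{0}\in K^{\circ}$ (using inhabited interior) and choose $r>0$ with $B(x_{0},r)\subset K^{\circ}$. Given $z\in\overline{K}^{\circ}$, pick $\varepsilon>0$ with $B(z,\varepsilon)\subset\overline{K}$. Constructively we cannot decide whether $z=x_{0}$, so I would use cotransitivity of the order on $\mathbf{R}$ applied to $0<r$ and the real number $\Vert z-x_{0}\Vert$: either $\Vert z-x_{0}\Vert<r$, in which case $z\in B(x_{0},r)\subset K^{\circ}$ and we are done, or $\Vert z-x_{0}\Vert>0$, so in particular $z\neq x_{0}$.

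In this second case, I would extend the segment from $x_{0}$ through $z$ slightly past $z$: choose $\mu>0$ small enough that $\mu\Vert z-x_{0}\Vert<\varepsilon$, and set $y=z+\mu(z-x_{0})$. Then $y\in B(z,\varepsilon)\subset\overline{K}$, and a direct calculation gives
\[
z=\frac{\mu}{1+\mu}\,x_{0}+\frac{1}{1+\mu}\,y=(1-\lambda)x_{0}+\lambda y,
\]
where $\lambda=1/(1+\mu)\in(0,1)$. Since $x_{0}\in K^{\circ}$, $y\in\overline{K}$, and $\lambda<1$, Corollary \ref{oct06c1} yields $z\in[x_{0},y)\subset K^{\circ}$, as required.

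The only subtle point is the absence of a decidable equality $z=x_{0}$ versus $z\neq x_{0}$; this is exactly what cotransitivity handles, and once that dichotomy is in place the rest is an application of Corollary \ref{oct06c1} to a segment constructed using the ball guaranteed by $z\in\overline{K}^{\circ}$.
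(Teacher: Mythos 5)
Your proof is correct and is essentially the paper's own argument: both fix $x_{0}\in K^{\circ}$, split constructively into the cases $\Vert z-x_{0}\Vert<r$ and $z\neq x_{0}$, and in the latter case extend the segment from $x_{0}$ through the given point of $\overline{K}^{\circ}$ slightly beyond it (staying in $\overline{K}$ via the ball hypothesis) so that Corollary \ref{oct06c1} places the point in $K^{\circ}$. The only differences are cosmetic: the paper extends by exactly the radius $s$ of a closed ball, while you extend by a factor $\mu$ chosen so the new endpoint stays in the open ball.
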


\begin{proof}
It will suffice to prove that $\overline{K}^{\circ}\subset K^{\circ}$. Fix
$x_{0}\in K$ and $r>0$ such that $B(x_{0},r)\subset K^{\circ}$. Given any
$y\in\overline{K}^{\circ}$, choose $s>0$ such that $\overline{B}%
(y,s)\subset\overline{K}$. Either $\left\Vert y-x_{0}\right\Vert <r$ and
therefore $y\in K^{\circ}$, or else $y\neq x_{0}$. in the latter event, let%
\[
\alpha\equiv1+\frac{s}{\left\Vert y-x_{0}\right\Vert }.
\]
and $z=(1-\alpha)x_{0}+\alpha y$. Then $\left\Vert z-y\right\Vert =\left(
\alpha-1\right)  \left\Vert y-x_{0}\right\Vert =s$, so $z\in\overline
{B}(y,s)\ $and therefore $z\in\overline{K}$. Hence, by Corollary
\ref{oct06c1}, $[x_{0},z)\subset K^{\circ}$. But $y=\left(  1-\frac{1}{\alpha
}\right)  x_{0}+\frac{1}{\alpha}z$, where $0<1/\alpha<1$; so $y\in\lbrack
x_{0},z)$ and therefore $y\in K^{\circ}$. It follows that $\overline{K}%
^{\circ}\subset K^{\circ}$.%
\hfill
$\square$
\end{proof}

%

\medskip
Our next lemma is a weak substitute for the classical logic rule
$\lnot(p\wedge q)=\lnot p\vee\lnot q$, which is not a theorem of
intuitionistic logic.

\begin{lemma}
\label{sept30l1}$(\lnot\lnot p\wedge\lnot\lnot q\Rightarrow\lnot\lnot(p\wedge
q))$
\end{lemma}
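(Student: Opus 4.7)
The statement is a standard intuitionistic tautology, so the plan is a direct syntactic derivation by repeatedly stripping off and reintroducing negations. The only subtlety is bookkeeping: each hypothesis of the form $\lnot\lnot r$ can be discharged only after one has produced $\lnot r$, so the proof is organised as a nest of three assumptions from the outside in.

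Concretely, I would assume $\lnot\lnot p$ and $\lnot\lnot q$ and aim to prove $\lnot\lnot(p\wedge q)$. To that end, assume further $\lnot(p\wedge q)$; the goal is now a contradiction. Since the only available way to use $\lnot\lnot p$ is to supply it with $\lnot p$, I would assume $p$ and try to derive a contradiction under that assumption. With $p$ in hand, the remaining hypothesis $\lnot\lnot q$ suggests the same move: assume $q$. Now $p$ and $q$ together give $p\wedge q$, which contradicts the assumption $\lnot(p\wedge q)$. This discharges $q$ and yields $\lnot q$, which in turn contradicts $\lnot\lnot q$. That contradiction discharges $p$ and yields $\lnot p$, which finally contradicts $\lnot\lnot p$, giving the desired absurdity.

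There is no real obstacle here beyond keeping the nesting of assumptions straight; in particular no instance of $\lnot\lnot r\Rightarrow r$ is used, so the argument is valid in \textbf{BISH}. The whole proof can be written in a few lines and requires nothing beyond the inference rules of intuitionistic propositional logic.
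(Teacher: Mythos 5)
Your proof is correct and is essentially the same argument as the paper's: under $\lnot\lnot p\wedge\lnot\lnot q$ and $\lnot(p\wedge q)$, assume $p$ and then $q$ to contradict $\lnot(p\wedge q)$, discharge to get $\lnot q$ (contradicting $\lnot\lnot q$), discharge again to get $\lnot p$ (contradicting $\lnot\lnot p$). The paper merely compresses this same nested natural-deduction argument into two displayed implication chains, whereas you spell out the assumption-discharge bookkeeping in prose.
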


\begin{proof}
Since%
\[
\lnot\lnot p\wedge\lnot\lnot q\Rightarrow(p\wedge\lnot(p\wedge q)\Rightarrow
\lnot q\Rightarrow\lnot q\wedge\lnot\lnot q)
\]
we see that%
\begin{equation}
\lnot\lnot p\wedge\lnot\lnot q\Rightarrow(\lnot(p\wedge q)\Rightarrow\lnot
p\Rightarrow\lnot p\wedge\lnot\lnot p)\Rightarrow\lnot\lnot(p\wedge
q).\tag*{$\square$}%
\end{equation}

\end{proof}

At last we have our first result about double complements of convex sets.

\begin{proposition}
\label{sept30p1}Let $K$ be a convex subset of a normed space $X$. Then
$\lnot\lnot K$ is convex.
\end{proposition}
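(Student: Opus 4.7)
The plan is direct: given $x, y \in \lnot\lnot K$ and $\lambda \in [0,1]$, I need to show that $z = (1-\lambda)x + \lambda y \in \lnot\lnot K$, i.e.\ that $\lnot\lnot(z \in K)$ follows from $\lnot\lnot(x \in K)$ and $\lnot\lnot(y \in K)$. The obstacle is that intuitionistically we cannot pull $x \in K$ and $y \in K$ out of their double negations in order to invoke convexity of $K$ directly, so we must do the combining at the level of the logical statements themselves, which is exactly what the preceding Lemma \ref{sept30l1} was set up for.

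The first step is to apply Lemma \ref{sept30l1} with $p$ the proposition ``$x \in K$'' and $q$ the proposition ``$y \in K$''. Since $x, y \in \lnot\lnot K$, this gives $\lnot\lnot(x \in K \wedge y \in K)$. The second step uses the convexity of $K$: as a theorem of the logic we have
\[
(x \in K \wedge y \in K) \Rightarrow z \in K.
\]
Double negation is monotone under implication (from $A \Rightarrow B$ one derives $\lnot\lnot A \Rightarrow \lnot\lnot B$ in intuitionistic logic), so prefixing $\lnot\lnot$ on both sides yields $\lnot\lnot(z \in K)$, i.e.\ $z \in \lnot\lnot K$. That closes the argument.

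There is essentially no hard calculation; the substance of the proposition lies entirely in Lemma \ref{sept30l1}, and the proof is a two-line application of it together with convexity of $K$.
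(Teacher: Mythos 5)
Your proof is correct and follows essentially the same route as the paper: both hinge on Lemma \ref{sept30l1} applied to the propositions $x\in K$ and $y\in K$, combined with the convexity implication $(x\in K\wedge y\in K)\Rightarrow z\in K$. The only cosmetic difference is that you invoke the intuitionistically valid rule ``from $A\Rightarrow B$ infer $\lnot\lnot A\Rightarrow\lnot\lnot B$'' as a known fact, whereas the paper unfolds exactly that step as an explicit contradiction (assuming $z\in\lnot K$ and deriving $\lnot(x\in K\wedge y\in K)$).
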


\begin{proof}
Given $x,y\in\lnot\lnot K$ and $\lambda\in\left[  0,1\right]  $, suppose that
$z\equiv\left(  1-\lambda\right)  x+\lambda y$ belongs to $\lnot K$. Then,
since $K$ is convex, $\lnot(x\in K\wedge y\in K)$. But, by Lemma
\ref{sept30l1}, $\lnot\lnot(x\in K\wedge y\in K)$. This contradiction ensures
that $z\notin\lnot K$, so $z\in\lnot\lnot K$.%
\hfill
$\square$
\end{proof}

\begin{proposition}
\label{oct01p1a}If $K$ is a convex subset of a normed space $X$ such that
$-(-K)\subset\lnot\lnot K$, then $-(-K)$ is convex.
\end{proposition}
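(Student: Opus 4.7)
The plan is to reduce the claim to facts already established in the excerpt, namely Lemma \ref{mar21l0}, Proposition \ref{sept30p1}, and Proposition \ref{oct03p1}. The hypothesis $-(-K) \subset \lnot\lnot K$ is precisely what Lemma \ref{mar21l0} needs in order to conclude that $-(-K) = (\lnot\lnot K)^{\circ}$, so the first step is simply to invoke that lemma and rewrite $-(-K)$ as the interior of $\lnot\lnot K$.

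Once we have this identification, the convexity problem is transferred to $(\lnot\lnot K)^{\circ}$. Since $K$ is convex, Proposition \ref{sept30p1} tells us that $\lnot\lnot K$ is also convex. Then Proposition \ref{oct03p1}, which says the interior of a convex subset of a normed space is convex, applies directly to $\lnot\lnot K$, giving that $(\lnot\lnot K)^{\circ}$ is convex. Combining with the first step, $-(-K)$ is convex.

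There is essentially no obstacle here, as the entire argument is a chain of three citations. The only thing to be careful about is checking that the hypothesis of Lemma \ref{mar21l0} is literally the hypothesis given, and that Proposition \ref{oct03p1} is stated for arbitrary convex subsets (which it is, since $\lnot\lnot K$ need not have inhabited interior itself). Thus the proof will be a single short paragraph chaining these three previously established results.
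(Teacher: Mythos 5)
Your proof is correct, but it takes a genuinely different route from the paper's. The paper argues directly: given $x,y\in-(-K)$ and $z=(1-\lambda)x+\lambda y$, it chooses $r>0$ with $B(x,r)\subset-(-K)$ and $B(y,r)\subset-(-K)$, uses Lemma \ref{oct03l1} to write each $\zeta\in B(z,r)$ as $(1-\lambda)\xi+\lambda\eta$ with $\xi\in B(x,r)$ and $\eta\in B(y,r)$, then applies the hypothesis together with Proposition \ref{sept30p1} to conclude $\zeta\in\lnot\lnot K\subset\lnot(-K)$; since then $B(z,r)\subset\lnot(-K)$, it follows that $z\in-(-K)$. In effect the paper replays, for $\lnot(-K)$, the same ball-decomposition argument that proves Proposition \ref{oct03p1}. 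Your proof factors out that repetition: Lemma \ref{mar21l0} turns the hypothesis into the identity $-(-K)=(\lnot\lnot K)^{\circ}$, Proposition \ref{sept30p1} gives convexity of $\lnot\lnot K$, and Proposition \ref{oct03p1} (which indeed requires no inhabitedness of the interior) gives convexity of $(\lnot\lnot K)^{\circ}$. All three cited results precede this proposition in the paper and their proofs are constructive, so there is no circularity and no hidden appeal to excluded middle. What the paper's direct argument buys is geometric self-containedness—it needs only Lemma \ref{oct03l1} and Proposition \ref{sept30p1}; what yours buys is economy and the explicit identification of $-(-K)$ as the interior of the convex set $\lnot\lnot K$, which anticipates the role this identity plays in Theorem \ref{mar20t1}.
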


\begin{proof}
Let $x,y\in-(-K)$. Since metric complements are open, there exists $r>0$ such
that $B(x,r)\subset-(-K)$ and $B(y,r)\subset-(-K)$. Let $0\leq\lambda\leq1$
and $z=(1-\lambda)x+\lambda y$. By Lemma \ref{oct03l1}, for each $\zeta\in
B(z,r)$ there exist $\xi\in B(x,r)$ and $\eta\in B(y,r)$ such that
$\zeta=\left(  1-\lambda\right)  \xi+\lambda\eta$. Then $\xi,\eta
\in-(-K)\subset\lnot\lnot K$, so by Proposition \ref{sept30p1}, $\zeta\in
\lnot\lnot K$ and hence $\zeta\in\lnot(-K)$. It follows that $B(z,r)\subset
\lnot(-K)$; whence $z\in-(-K)$.%
\hfill
$\square$
\end{proof}

%

\medskip
Geometric intuition suggests that if $X$ is a normed linear space and $S$ is a
convex subset of $X$, then $-(-S)\subset S^{\circ}$. This intuition (trivially
true if $S^{\circ}$ is empty, and almost trivially true of $S$ is a ball in a
normed linear space) is confirmed classically by applying the law of excluded
middle to the conclusion of our first theorem.

\begin{theorem}
\label{mar20t1}Let $X$ be a normed linear space, and $K$ a convex subset of
$X$ with inhabited interior. Then $-(-K)=\left(  \lnot\lnot K\right)  ^{\circ}
$.
\end{theorem}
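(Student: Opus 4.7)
By Lemma \ref{mar21l0}, it suffices to prove $-(-K) \subset \lnot\lnot K$. So let $z \in -(-K)$, fix $r > 0$ with $B(z, r) \cap (-K) = \varnothing$, and seek a contradiction from the assumption $z \notin K$. Since $K$ has inhabited interior, choose $x_0 \in K^\circ$ and $\delta > 0$ with $B(x_0, \delta) \subset K$. Because $z \notin K \supset B(x_0, \delta)$, one has $\|z - x_0\| \geq \delta > 0$, so $z$ is apart from $x_0$.

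The idea is to push $z$ a small distance along the ray from $x_0$ through $z$, defining
\[
u = z + \epsilon\,\frac{z - x_0}{\|z - x_0\|}
\]
for some $\epsilon \in (0, r)$, so that $u \in B(z, r)$ and $z = (1-\lambda)x_0 + \lambda u$ with $\lambda = \|z - x_0\|/(\|z - x_0\| + \epsilon) \in (0,1)$. Since $u \in B(z,r)$ and $B(z,r) \cap (-K) = \varnothing$, we have $u \notin -K$, which unfolds to: for every $r' > 0$, $\lnot\lnot\exists\, y \in K$ with $\|u - y\| < r'$.

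The key step is to choose $r'$ small enough that $\lambda r' < (1 - \lambda)^2 \delta$, and then reason (temporarily) under the hypothesis that such a $y$ does exist. Set $z_y = (1 - \lambda)x_0 + \lambda y$. Lemma \ref{oct06l1} applied with $x = x_0$ and this $y$ yields $B(z_y, (1 - \lambda)^2 \delta) \subset K$; on the other hand $\|z - z_y\| = \lambda\|u - y\| < \lambda r' < (1-\lambda)^2 \delta$, so $z \in K$, contradicting the standing assumption $z \notin K$. This refutes the existence of the witness $y$, and combined with the $\lnot\lnot$-existence obtained above it produces $\bot$. Therefore $z \in \lnot\lnot K$, and Lemma \ref{mar21l0} promotes this to $-(-K) = (\lnot\lnot K)^\circ$.

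The substantive obstacle is the compatibility of the numerical choices: $\epsilon$ must be picked below $r$, and then $r'$ below the positive quantity $(1 - \lambda)^2 \delta/\lambda$. Both bounds are strictly positive and are computable from $r$, $\delta$, and $\|z - x_0\|$, so suitable values exist. The one constructive subtlety worth flagging is that the witness $y$ is only available under a double negation; this is harmless because the conclusion we draw from $y$ is itself $\bot$, which survives double-negation elimination.
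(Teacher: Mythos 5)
Your proof is correct, and it takes a genuinely different route from the paper's. Both arguments begin the same way, using Lemma \ref{mar21l0} to reduce the theorem to the inclusion $-(-K)\subset\lnot\lnot K$, but the cores differ. The paper splits into cases (either $\left\Vert a-x_{0}\right\Vert <t$, or $a\neq x_{0}$), translates so that the point under scrutiny is $0$, and, assuming $0\in\lnot K$, builds the scaled reflected ball $B(-\lambda x_{0},\lambda t)$, showing it lies inside $-(-K)$ by scaling and inside $\lnot K$ by convexity (any of its points lying in $K$, combined convexly with its anti-scaled image in $B(x_{0},t)\subset K$, would put $0$ in $K$); Lemma \ref{oct14l1} then places its centre in $-K\cap-(-K)$, which is absurd. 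You avoid both the case split and the translation by extracting apartness $\left\Vert z-x_{0}\right\Vert \geq\delta$ directly from the standing assumption $z\notin K$, then push $z$ along the ray from $x_{0}$ to a point $u\in B(z,r)$ with $u\notin-K$, unfold this to the double-negated existence of points of $K$ arbitrarily close to $u$, and use Lemma \ref{oct06l1} to convert any such witness into $z\in K$, contradicting the assumption outright; as you note, the $\lnot\lnot\exists$ is harmless because your goal is absurdity. One hypothesis of Lemma \ref{oct06l1} that you do not check is $y\neq x_{0}$ for the witness $y$; this does follow from your choices, since $\lambda r^{\prime}<(1-\lambda)^{2}\delta$ together with $\delta\leq\left\Vert z-x_{0}\right\Vert $ forces $r^{\prime}<\epsilon$, whence $\left\Vert y-x_{0}\right\Vert \geq\left\Vert u-x_{0}\right\Vert -\left\Vert u-y\right\Vert >\left\Vert z-x_{0}\right\Vert \geq\delta>0$, but it should be said. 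As for what each approach buys: the paper's argument is self-contained, relying only on convexity and Lemma \ref{oct14l1}, whereas yours leans on Lemma \ref{oct06l1} (which the paper quotes from the literature without proof); in exchange, yours is shorter, dispenses with the reduction to $a=0$, and isolates the single constructive subtlety of the situation in one clearly flagged step.
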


\begin{proof}
In view of Lemma \ref{mar21l0}, it will suffice to prove that $-(-K)\subset
\lnot\lnot K$. Fix $x_{0}\in K^{\circ\text{ }}$and $t>0$ such that
$B(x_{0},t)\subset K^{\circ}$. Given $a\in-(-K)$, we have either $\left\Vert
a-x_{0}\right\Vert <t$ and therefore $a\in K^{\circ}\subset\left(  \lnot\lnot
K\right)  ^{\circ}$, or, as we may suppose, $a\neq x_{0}$. Consider first the
case where $a=0$, when there exists $r>0$ such that $B(0,r)\subset-(-K)$.
Heading towards a contradiction, assume that $0\in\lnot K$. Choose $R>r$ such
that $B(-x_{0},t)\subset B(0,R)$, let $\lambda=r/R$, and let $x_{1}=-\lambda
x_{0}$. Then $B(x_{1},\lambda t)=\lambda B(-x_{0},t)$. Also, for each $z\in
B(-x_{0},t)$, $\left\Vert z\right\Vert <R$ and therefore $\left\Vert \lambda
z\right\Vert =\frac{r}{R}\left\Vert z\right\Vert <r$; whence%
\begin{equation}
B(x_{1},\lambda t)\subset B(0,r)\subset-(-K). \label{A1}%
\end{equation}
On the other hand, if $z\in K\cap B(x_{1},\lambda t)$, then $-\frac{1}%
{\lambda}z\in$ $B(x_{0},t)\subset K$ and therefore%
\[
0=\frac{\lambda}{1+\lambda}\left(  -\frac{1}{\lambda}z\right)  +\frac
{1}{1+\lambda}z\in K,
\]
a contradiction. Hence $z\in\lnot K$ for each $z\in B(x_{1},\lambda t)$, from
which it follows that $x_{1}\in-K$, contradicting (\ref{A1}). We conclude that
if $0\in-(-K)$, then $0\in\lnot\lnot K$. This disposes of the case $a=0$. In
the general case let $L\equiv\left\{  x-a:x\in K\right\}  $, which is a convex
subset of $X$ with inhabited interior such that $0\in-(-L)$. If $a\in\lnot K$,
then $0\in\lnot L$, which contradicts the first part of the proof, so
$a\in\lnot\lnot K$. Thus $-(-K)\subset\lnot\lnot K$.%
\hfill
$\square$
\end{proof}

\begin{corollary}
\label{oct06c2}If $K$ is a convex subset with inhabited interior in a normed
space $X$, then $-(-K)$ is convex.
\end{corollary}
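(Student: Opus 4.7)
The plan is to deduce convexity of $-(-K)$ directly from the machinery already assembled, rather than by a fresh geometric argument. The shortest route goes through Theorem \ref{mar20t1}: under the hypothesis of inhabited interior, that theorem identifies $-(-K)$ with $(\lnot\lnot K)^{\circ}$, so it suffices to observe that the interior of a convex set is convex and that $\lnot\lnot K$ is convex whenever $K$ is.

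More precisely, I would first invoke Proposition \ref{sept30p1} to conclude that $\lnot\lnot K$ is convex, then Proposition \ref{oct03p1} to conclude that $(\lnot\lnot K)^{\circ}$ is convex, and finally Theorem \ref{mar20t1} to rewrite this as $-(-K)$. No step requires any fresh constructive analysis.

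An equally short alternative, which avoids appealing to the full strength of Theorem \ref{mar20t1}, is to apply Proposition \ref{oct01p1a}: that proposition already reduces the convexity of $-(-K)$ to the inclusion $-(-K)\subset\lnot\lnot K$, and this inclusion is precisely what is established in the body of the proof of Theorem \ref{mar20t1}. Either route is essentially bookkeeping, so there is no genuine obstacle; the only small point to keep honest is that the inhabited-interior hypothesis is needed exactly to license the use of Theorem \ref{mar20t1} (or equivalently the hypothesis of Proposition \ref{oct01p1a}), and not for the convexity-of-interior or convexity-of-$\lnot\lnot K$ steps, both of which hold for arbitrary convex $K$.
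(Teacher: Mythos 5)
Your proposal is correct, and your second route is precisely the paper's proof: Theorem \ref{mar20t1} supplies the inclusion $-(-K)\subset\lnot\lnot K$, and Proposition \ref{oct01p1a} then yields convexity. Your first route (Proposition \ref{sept30p1} plus Proposition \ref{oct03p1} applied to $(\lnot\lnot K)^{\circ}=-(-K)$) is only a trivial rearrangement of the same machinery, so there is nothing genuinely different to compare.
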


\begin{proof}
By Theorem \ref{mar20t1}, $-(-K)\subset\lnot\lnot K$, so we can apply
Proposition \ref{oct01p1a}.%
\hfill
$\square$
\end{proof}

\begin{corollary}
\label{oct04c1}Let $K$ be a convex set with inhabited interior in a normed
linear space $X$. Then $-(-K)$ is dense in $\lnot\lnot K$.
\end{corollary}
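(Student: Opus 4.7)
The plan is to reduce the corollary to an application of Theorem \ref{mar20t1} together with Propositions \ref{sept30p1} and \ref{oct06p1}. By Theorem \ref{mar20t1}, the hypothesis that $K$ is convex with inhabited interior gives $-(-K) = (\lnot\lnot K)^{\circ}$, so the task becomes showing that $(\lnot\lnot K)^{\circ}$ is dense in $\lnot\lnot K$.

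First I would verify that $\lnot\lnot K$ is itself a convex set with inhabited interior. Convexity is exactly Proposition \ref{sept30p1}. For the interior, note that $K \subset \lnot\lnot K$, so any open ball $B(x_{0},r) \subset K$ witnessing $x_{0} \in K^{\circ}$ also lies inside $\lnot\lnot K$; hence $K^{\circ} \subset (\lnot\lnot K)^{\circ}$, and the latter is inhabited since the former is.

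Second, I would apply Proposition \ref{oct06p1} to the convex set $\lnot\lnot K$ (which has inhabited interior by the preceding step). That proposition actually establishes density of the interior in the set itself (the published proof reduces density in $\overline{K}$ to density in $K$), so it yields that $(\lnot\lnot K)^{\circ}$ is dense in $\lnot\lnot K$. Rewriting $(\lnot\lnot K)^{\circ}$ as $-(-K)$ via Theorem \ref{mar20t1} completes the argument.

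There is no real obstacle here; the corollary is essentially a packaging of Theorem \ref{mar20t1} with the already-established machinery for interiors of convex sets. The only subtlety worth stating explicitly in the written proof is the justification that $\lnot\lnot K$ inherits an inhabited interior from $K$, since Proposition \ref{oct06p1} is being applied to $\lnot\lnot K$ rather than to $K$ directly.
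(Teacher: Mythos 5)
Your proposal is correct and follows essentially the same route as the paper: apply Theorem \ref{mar20t1} to get $-(-K)=(\lnot\lnot K)^{\circ}$, note that $\lnot\lnot K$ is convex by Proposition \ref{sept30p1} with $K^{\circ}\subset(\lnot\lnot K)^{\circ}$ giving inhabited interior, and then invoke Proposition \ref{oct06p1} for the set $\lnot\lnot K$. The only cosmetic difference is that the paper passes through density in $\overline{\lnot\lnot K}$ and then restricts to $\lnot\lnot K$, whereas you observe directly that the proposition's proof gives density in the set itself; both are fine.
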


\begin{proof}
By Theorem \ref{mar20t1}, $-(-K)=\left(  \lnot\lnot K\right)  ^{\circ}$. Since
$\lnot\lnot K\ $is convex by Proposition \ref{sept30p1}, and $\left(
\lnot\lnot K\right)  ^{\circ}\supset K^{\circ}$, we see from Proposition
\ref{oct06p1} that $-(-K)\ $is dense in $\overline{\lnot\lnot K}$ and hence in
$\lnot\lnot K$.%
\hfill
$\square$
\end{proof}

\section{The finite-dimensional case}

For finite-dimensional spaces we can drop from Theorem \ref{mar20t1} the
hypothesis that $K$ have inhabited interior. To do this, we bring into play
the constructive theory of simplices, of which mention two things in
particular.\footnote{%
\normalfont\sf
For more information about the constructive theory of simplices we refer the
reader to \cite{DSBsimplex}.}

\begin{proposition}
\label{sept15p1}Let $\Sigma$ be an $n$-simplex \ with vertices $a_{1}%
,\ldots,a_{n+1}$ in an $n$-dimen- sional normed linear space $X$. Then the
\textbf{barycentre}, $\frac{1}{n+1}\sum_{k=1}^{n+1}a_{k}$, of $\Sigma$ belongs
to $\Sigma^{\circ}$.
\end{proposition}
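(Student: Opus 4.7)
The plan is to translate the problem into $\mathbf{R}^n$ via the affine chart supplied by the vertices of $\Sigma$ and then exhibit explicitly an open ball in the standard simplex around the image of the barycentre.

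First I would set $v_k = a_{k+1} - a_1$ for $k = 1,\ldots,n$. The affine independence of $a_1,\ldots,a_{n+1}$, which is part of the definition of an $n$-simplex in an $n$-dimensional space, means that $v_1,\ldots,v_n$ form a basis of $X$. The linear map
\[
T\colon \mathbf{R}^n \to X, \qquad T(\lambda_1,\ldots,\lambda_n) \equiv \sum_{k=1}^n \lambda_k v_k,
\]
is therefore a linear bijection between two $n$-dimensional normed spaces, so the constructive theory of such spaces (see \cite{BVtech,DSBsimplex}) yields a constant $c > 0$ with $\|T^{-1} u\|_\infty \le c\|u\|$ for every $u \in X$.

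Next, every point of $\Sigma$ can be written in the form $x = a_1 + T(\lambda)$ with $\lambda$ ranging over the standard simplex
\[
\Delta \equiv \{\lambda \in \mathbf{R}^n : \lambda_k \ge 0 \text{ for all } k,\ \sum\nolimits_{k=1}^n \lambda_k \le 1\},
\]
the barycentric coefficient of $a_{n+1}$ being $1 - \sum_{k=1}^n \lambda_k$. The barycentre $b = \frac{1}{n+1}\sum_{k=1}^{n+1}a_k$ corresponds to $\lambda^{\ast} = (\tfrac{1}{n+1},\ldots,\tfrac{1}{n+1})$; since each coordinate $\lambda^{\ast}_k = \tfrac{1}{n+1}$ is strictly positive and $\sum_k \lambda^{\ast}_k = \tfrac{n}{n+1} < 1$, the $\ell^{\infty}$-ball of radius $\tfrac{1}{n+1}$ about $\lambda^{\ast}$ lies inside $\Delta$. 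Setting $r = \tfrac{1}{c(n+1)}$, for any $x \in B(b,r)$ the vector $\lambda \equiv \lambda^{\ast} + T^{-1}(x-b)$ satisfies $\|\lambda - \lambda^{\ast}\|_\infty < \tfrac{1}{n+1}$; hence $\lambda \in \Delta$ and $x = a_1 + T(\lambda) \in \Sigma$. Thus $B(b,r) \subset \Sigma$, so $b \in \Sigma^{\circ}$.

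The only real obstacle is the constructive assertion that the inverse of a linear bijection between finite-dimensional normed spaces is bounded. This is where the hypothesis on the dimension of $X$ is genuinely used, and it is precisely the kind of ingredient one expects to draw from the constructive theory of simplices in \cite{DSBsimplex}; the rest of the argument is an elementary manipulation of barycentric coordinates.
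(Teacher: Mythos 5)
Your route is genuinely different from the paper's, which proves this proposition simply by citing Proposition 21 and Corollary 23 of \cite{DSBsimplex}; a direct argument via barycentric coordinates is a reasonable alternative, and the constructive ingredient you isolate (boundedness of the inverse of a linear bijection between finite-dimensional normed spaces, essentially Bishop's lemma on linearly independent vectors) is indeed available. However, there is a concrete error in your key geometric step: the $\ell^{\infty}$-ball of radius $\frac{1}{n+1}$ about $\lambda^{\ast}=\bigl(\frac{1}{n+1},\ldots,\frac{1}{n+1}\bigr)$ is \emph{not} contained in $\Delta$ once $n\geq 2$. The constraints $\lambda_{k}\geq 0$ do tolerate perturbations of size $\frac{1}{n+1}$ in each coordinate, but the constraint $\sum_{k}\lambda_{k}\leq 1$ has slack only $1-\frac{n}{n+1}=\frac{1}{n+1}$ in the \emph{sum}, and an $\ell^{\infty}$-perturbation of size $\varepsilon$ can increase the sum by $n\varepsilon$. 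For instance, with $n=2$ the point $\lambda=\bigl(\frac{3}{5},\frac{3}{5}\bigr)$ satisfies $\left\Vert \lambda-\lambda^{\ast}\right\Vert _{\infty}=\frac{4}{15}<\frac{1}{3}$, yet $\lambda_{1}+\lambda_{2}=\frac{6}{5}>1$, so $\lambda\notin\Delta$ and the claimed inclusion fails.

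The repair is immediate: take the $\ell^{\infty}$ radius to be $\frac{1}{n(n+1)}$, since then each $\lambda_{k}>\frac{1}{n+1}-\frac{1}{n(n+1)}=\frac{n-1}{n(n+1)}\geq 0$ and $\sum_{k}\lambda_{k}<\frac{n}{n+1}+n\cdot\frac{1}{n(n+1)}=1$; correspondingly set $r=\frac{1}{cn(n+1)}$ in $X$. There is also a harmless indexing slip: in your parametrisation $x=a_{1}+T(\lambda)$ it is $a_{1}$, not $a_{n+1}$, whose barycentric coefficient is $1-\sum_{k}\lambda_{k}$. With these corrections your argument goes through and yields a self-contained proof (modulo the cited bounded-inverse fact), whereas the paper defers the entire content to \cite{DSBsimplex}; the price is that you must know the definition of an $n$-simplex in \cite{DSBsimplex} supplies affine independence in the strong constructive sense needed to invoke that bounded-inverse lemma.
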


\begin{proof}
This follows from Proposition 21 and Corollary 23 of \cite{DSBsimplex}.%
\hfill
$\square$
\end{proof}

\begin{proposition}
\label{sept16c1}Let $\Sigma$ be an $n$-simplex with vertices $a_{1}%
,\ldots,a_{n+1}$ in an $n$-dimen- sional Banach space, and let $c\ $be an
interior point of $\Sigma$. There exists $\delta>0$ such that if $x_{k}\in X$
and $\left\Vert x_{k}-a_{k}\right\Vert <\delta$ for each $k\leq n+1$, then the
convex hull of the points $x_{1},\ldots,x_{n+1}$ is an $n$-simplex with $c$ in
its interior.
\end{proposition}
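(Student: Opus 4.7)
The plan is to transport a ball around $c$ in $\Sigma$ via an affine homeomorphism $T$ that sends each $a_{k}$ to $x_{k}$, exploiting the fact that $T$ is a small perturbation of the identity.

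Fix $r>0$ with $B(c,2r)\subset\Sigma$, which exists because $c\in\Sigma^{\circ}$. Since $a_{1},\ldots,a_{n+1}$ are the vertices of an $n$-simplex in the $n$-dimensional Banach space $X$, the constructive simplex theory of \cite{DSBsimplex} supplies a continuous affine map $\Lambda\colon X\to\mathbf{R}^{n+1}$ assigning barycentric coordinates relative to the $a_{k}$, and the linear part of $\Lambda$ is bounded; let $M>0$ satisfy $\sum_{k}|\lambda_{k}(y)-\lambda_{k}(z)|\leq M\|y-z\|$ for all $y,z\in X$. Now define the affine map $T\colon X\to X$ by
\[
T(y)=y+\sum_{k=1}^{n+1}\lambda_{k}(y)(x_{k}-a_{k}),
\]
so that $T(a_{k})=x_{k}$ and $T$ sends the affine combination $\sum\lambda_{k}a_{k}$ to $\sum\lambda_{k}x_{k}$. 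For $y\in\Sigma$ we have $\lambda_{k}(y)\geq0$ with $\sum_{k}\lambda_{k}(y)=1$, whence $\|T(y)-y\|<\delta$; more generally, the linear part of $T-I$ has operator norm at most $M\delta$.

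For $\delta$ small enough (depending on $M$ and $r$), the standard perturbation-of-the-identity argument in finite dimensions shows that $T$ is an affine bijection of $X$ with Lipschitz constants for $T$ and $T^{-1}$ bounded by some $L$ close to $1$. In particular the points $x_{1},\ldots,x_{n+1}$ are quantitatively affinely independent, so by the criteria in \cite{DSBsimplex} they are the vertices of an $n$-simplex $\Sigma'=T(\Sigma)$. To conclude that $c\in(\Sigma')^{\circ}$, take any $y\in B(c,r)$ and set $z=T^{-1}(y)$. Using $\|T(c)-c\|<\delta$ we estimate
\[
\|z-c\|=\|T^{-1}(y)-T^{-1}(T(c))\|\leq L\|y-T(c)\|<L(r+\delta)<2r,
\]
provided $\delta$ is chosen small enough that $L(r+\delta)<2r$, so $z\in B(c,2r)\subset\Sigma$ and therefore $y=T(z)\in T(\Sigma)=\Sigma'$. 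Hence $B(c,r)\subset\Sigma'$ and $c\in(\Sigma')^{\circ}$.

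The principal obstacle is the constructive, quantitative inversion of $T$: unlike the classical case, one must derive an explicit $\delta$ from a modulus of affine independence for the vertices $a_{1},\ldots,a_{n+1}$. This is exactly the sort of uniform data the constructive simplex theory of \cite{DSBsimplex} is designed to supply, so the argument reduces to combining that machinery with the elementary perturbation estimate above.
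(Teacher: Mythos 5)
Your proof is correct, but it takes a genuinely different route from the paper, whose entire proof of this proposition is a citation: ``This follows from Theorem 29 and Proposition 21 of \cite{DSBsimplex}.'' You instead reconstruct the result via an explicit perturbation-of-the-identity argument: the affine map $T(y)=y+\sum_{k}\lambda_{k}(y)(x_{k}-a_{k})$ sends each $a_{k}$ to $x_{k}$, its linear part differs from the identity by at most $M\delta$ in operator norm, and Neumann-series inversion (constructively valid here, since a finite-dimensional Banach space is complete) makes $T$ bi-Lipschitz with constants tending to $1$ as $\delta\to0$; your displayed estimate then yields $B(c,r)\subset T(\Sigma)$, which is exactly the convex hull of the $x_{k}$. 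One can even extract an explicit threshold: with $L=(1-M\delta)^{-1}$, the requirement $L(r+\delta)<2r$ is met whenever $\delta<r/(1+2rM)$, which automatically gives $M\delta<\frac{1}{2}$. Note, though, that the two inputs you draw from \cite{DSBsimplex} --- globally defined barycentric coordinates with the Lipschitz bound $M$, and the criterion that $n+1$ quantitatively affinely independent points span an $n$-simplex --- are not cosmetic: constructively, pointwise affine independence of the $a_{k}$ would not yield the uniform bound $M$ (a positive infimum cannot be inferred from pointwise positivity on a compact set), so $M$ must come packaged with the constructive definition of a simplex. That is precisely the content the paper outsources wholesale to the cited results; your version buys transparency and explicit constants at the cost of re-deriving part of what those results encapsulate, while the paper's citation keeps the quantitative affine-independence machinery where it was developed.
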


\begin{proof}
This follows from Theorem 29 and Proposition 21 of \cite{DSBsimplex}.%
\hfill
$\square$
\end{proof}

\begin{theorem}
\label{mar20t2}If $K$ is an inhabited convex subset of a finite-dimensional
normed linear space $X$, then $-(-K)=$ $(\lnot\lnot K)^{\circ}$.
\end{theorem}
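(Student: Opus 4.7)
By Lemma~\ref{mar21l0} it suffices to prove $-(-K) \subset \lnot\lnot K$, and I will establish the stronger containment $-(-K) \subset K^{\circ}$. The plan has two parts: first reduce to the closure $\overline{K}$ to unlock Lemma~\ref{sept27l2}, then use the simplex machinery of Propositions~\ref{sept15p1} and \ref{sept16c1} to manufacture an $n$-simplex inside $K$ around an arbitrary point of $-(-K)$.

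Since $X$ is a finite-dimensional Banach space and $K$ is an inhabited convex set, $\overline{K}$ is a closed, inhabited, convex subset of $X$ and therefore located (a standard fact in the BISH theory of convex sets in finite-dimensional spaces). Using the general identity $-K = -\overline{K}$, Lemma~\ref{sept27l2} applied to $\overline{K}$ yields
\[
-(-K) \;=\; -(-\overline{K}) \;=\; \overline{K}^{\circ}.
\]
Now fix $a \in -(-K) = \overline{K}^{\circ}$, and choose $r>0$ with $\overline{B}(a,r) \subset \overline{K}$. Set $n=\dim X$ and inscribe in $\overline{B}(a,r)$ an $n$-simplex $\Sigma$ with vertices $v_{1},\ldots,v_{n+1}$ whose barycentre is $a$; by Proposition~\ref{sept15p1}, $a \in \Sigma^{\circ}$. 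Proposition~\ref{sept16c1} then supplies $\delta>0$ such that any $w_{1},\ldots,w_{n+1}\in X$ with $\|w_{k}-v_{k}\|<\delta$ for each $k$ span an $n$-simplex containing $a$ in its interior. Since each $v_{k}\in\overline{K}$, such $w_{k}$ can be chosen inside $K$; their convex hull is then contained in $K$ by convexity and has $a$ in its interior, so $a \in K^{\circ} \subset \lnot\lnot K$.

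The principal obstacle is the appeal to locatedness of $\overline{K}$: if this is not taken for granted in the paper's development, one must either cite or prove it as a preliminary lemma, after which the rest of the argument reduces cleanly to the simplex-perturbation result.
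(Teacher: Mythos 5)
Your proposal has a fatal gap, and in fact the containment you set out to prove is too strong to be provable at all. The crux is your appeal to the ``standard fact'' that $\overline{K}$, being a closed, inhabited, convex subset of a finite-dimensional space, is located. This is not a theorem of \textbf{BISH}: it implies the law of excluded middle. Take any proposition $p$ with $\lnot\lnot p$ and let $K=[0,1]\cup\left\{x\in[0,2]:p\right\}$, the set used in Proposition \ref{oct03p2}. Then $\overline{K}$ is a closed, bounded, inhabited, convex subset of $\mathbf{R}$; but if $\rho(3/2,\overline{K})$ existed, cotransitivity would let us decide $\rho(3/2,\overline{K})>0$ or $\rho(3/2,\overline{K})<1/2$. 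The first alternative gives $\lnot p$ (if $p$ held, then $3/2\in K$), contradicting $\lnot\lnot p$; so the second would hold, producing a point of $K$ in $(1,2]$ and hence $p$. Thus your locatedness claim yields $\lnot\lnot p\Rightarrow p$ for every $p$. Locatedness is exactly the extra hypothesis that the paper pays for separately in Theorem \ref{mar24t1}; it cannot be had for free. Without it, Lemma \ref{sept27l2} is unavailable (it rests on Lemma \ref{mar21l2}, whose proof uses the existence of $\rho(x,S)$), so the identification $-(-K)=\overline{K}^{\circ}$ that drives your whole argument collapses.

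Worse, your target inclusion $-(-K)\subset K^{\circ}$ is itself constructively unprovable: since $K^{\circ}\subset-(-K)$ always holds, it amounts to $-(-K)=K^{\circ}$ for every inhabited convex subset of a finite-dimensional space, and Proposition \ref{oct03p2} shows that this (even restricted to convex subsets of $\mathbf{R}$ with inhabited interior and $-(-K)$ located) is equivalent to the law of excluded middle. This is precisely why the theorem is stated with $\left(\lnot\lnot K\right)^{\circ}$ rather than $K^{\circ}$. What is salvageable from your proposal is the simplex-perturbation step: it is essentially a correct proof that $\overline{K}^{\circ}\subset K^{\circ}$ in finite dimensions, which is the paper's Proposition \ref{mar24p1}. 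The paper's actual proof of Theorem \ref{mar20t2} avoids closures and locatedness altogether: given $x\in-(-K)$ with $x\in\lnot K$ (heading for a contradiction), either $K$ contains $n=\dim X$ linearly independent vectors---in which case the simplex spanned by them and $0$ shows that $K^{\circ}$ is inhabited, and Theorem \ref{mar20t1} applies---or else, by a downward induction on the number of linearly independent vectors $K$ can contain, $K$ and $x$ are trapped in smaller and smaller located subspaces, until $K=\{0\}$ and $-(-K)=\varnothing$, the final contradiction. You would need to replace your reduction by an argument of this kind.
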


\begin{proof}
We may assume that $0\in K$ (see the last part of the proof of Theorem
\ref{mar20t1}). Let $n=\dim X$. Given $x\in-(-K)$, suppose that $x\in\lnot K$.
Suppose also that $K$ contains $n$ linearly independent elements $x_{1}%
,\ldots,x_{n}$ of $X$, and let $\Sigma$ be the $n$-simplex $\Sigma$ with
vertices $0,x_{1},\ldots,x_{n}$. Then $\Sigma$ is convex and each of its
vertices belongs to the convex set $K$, so $\Sigma\subset K$. By Proposition
\ref{sept15p1}, the interior of $\Sigma$ is inhabited, as therefore is the
interior of $K$. Hence, by Theorem \ref{mar20t1}, $x\in\lnot\lnot K$, a
contradiction. Thus, in fact, $K$ cannot contain $n$ linearly independent
vectors in $X$. Now suppose that for some integer $k$ with $1<k\leq n$, we
have proved that $K$ cannot contain $k$ linearly independent vectors. Assume
that $K$ contains $k-1$ linearly independent vectors spanning a $(k-1)$%
-dimensional linear space $V$. If $K\cap-V$ is inhabited, then $K$ contains
$k$ linearly independent vectors, a contradiction. Since $V$ is
finite-dimensional and hence located and complete, it follows that
$K\subset\overline{V}=V$. On the other hand, if $x\in-V$, then $x\in-K$, a
contradiction; so $x\in\overline{V}=V$. Thus $K$ is an inhabited convex subset
of $V$ containing $k-1$ linearly independent vectors, $x$ $\in V\cap-(-K)$,
and $x\in\lnot K$. The first part of this proof, with $k$ replacing $n$, shows
that this is impossible. Hence $K$ cannot contain $k-1$ linearly independent
vectors. Pursuing this downward inductive argument, we now see that $K=\{0\}$.
Then $x\in-(-K)=\varnothing$, a final contradiction from which we conclude
that $x\in\lnot\lnot K$. Hence $-(-K)\subset\lnot\lnot K\ $and it remains to
apply Lemma \ref{mar21l0}.%
\hfill
$\square$
\end{proof}

%

\medskip
When our convex set is located, we can strengthen Theorems \ref{mar20t1} and
\ref{mar20t2}.

\begin{theorem}
\label{mar24t1}Let $K$ be a located convex subset of a normed linear space $X
$. If either $K^{\circ}$ is inhabited or $X$ is finite-dimensional, then
$-(-K)=$ $K^{\circ}$.
\end{theorem}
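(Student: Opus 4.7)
The plan is to combine Theorems~\ref{mar20t1} and \ref{mar20t2}, which under either of the present hypotheses already give $-(-K) = (\lnot\lnot K)^{\circ}$, with the located hypothesis on $K$ in order to collapse $(\lnot\lnot K)^{\circ}$ down to $K^{\circ}$. The inclusion $K^{\circ} \subset (\lnot\lnot K)^{\circ}$ is immediate from $K \subset \lnot\lnot K$, so only the reverse inclusion needs argument. For this, observe that $-K \subset \lnot K$, whence $\lnot\lnot K \subset \lnot(-K)$; location now enters through Lemma~\ref{mar21l2} to yield $\lnot(-K) \subset \overline{K}$. Consequently $(\lnot\lnot K)^{\circ} \subset \overline{K}^{\circ}$, and everything reduces to proving $\overline{K}^{\circ} \subset K^{\circ}$ under each alternative hypothesis.

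If $K^{\circ}$ is inhabited, this final inclusion is precisely Corollary~\ref{oct07c2}, so we are done at once.

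The finite-dimensional case is the substantive one. Write $n = \dim X$, take an arbitrary $x \in \overline{K}^{\circ}$, and fix $r > 0$ with $B(x, r) \subset \overline{K}$. I would then produce $n+1$ affinely independent points $a_1, \ldots, a_{n+1}$ in $B(x, r)$ whose barycentre is $x$---obtained, for instance, by taking any affinely independent configuration, translating so that its barycentre sits at $x$, and scaling inward until all vertices lie inside $B(x, r)$. Proposition~\ref{sept15p1} then places $x$ in the interior of the simplex with vertices $a_1, \ldots, a_{n+1}$. Each $a_k$ lies in $\overline{K}$, so can be approximated arbitrarily well by points of $K$; Proposition~\ref{sept16c1} supplies a tolerance $\delta > 0$ within which any such approximants $y_1, \ldots, y_{n+1} \in K$ still form an $n$-simplex whose interior contains $x$. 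Since $K$ is convex, this new simplex is contained in $K$, hence $x \in K^{\circ}$.

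The main obstacle is the finite-dimensional step just sketched: the constructive passage from $x \in \overline{K}^{\circ}$ to $x \in K^{\circ}$ must approximate an entire spanning configuration of points simultaneously and retain the geometric fact that $x$ lies in the interior, which is exactly what the simplex stability result of Proposition~\ref{sept16c1} provides. Everything else in the argument is bookkeeping built on the earlier lemmas.
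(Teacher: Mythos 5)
Your proof is correct, and in fact it is substantially more explicit than the paper's own, which consists of the single chain
\[
-(-K)=(\lnot\lnot K)^{\circ}\subset\left(\lnot(-K)\right)^{\circ}=-(-K),
\]
justified by Theorem~\ref{mar20t1} or~\ref{mar20t2} together with Lemma~\ref{oct03l2}, followed by ``from which the result follows.'' As printed, that chain is circular, never mentions $K^{\circ}$, and never visibly uses locatedness; the reader must supply exactly the steps you spell out. Your completion is the right one: locatedness enters via Lemma~\ref{mar21l2} to give $\lnot\lnot K\subset\lnot(-K)\subset\overline{K}$, reducing everything to $\overline{K}^{\circ}\subset K^{\circ}$, which in case (i) is Corollary~\ref{oct07c2} and in case (ii) needs a separate argument. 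Your simplex argument for case (ii) --- place a simplex with barycentre $x$ inside a ball contained in $\overline{K}$, invoke Propositions~\ref{sept15p1} and~\ref{sept16c1}, then perturb the vertices into $K$ using density of $K$ in $\overline{K}$ --- is precisely the paper's Proposition~\ref{mar24p1} and its proof; but the paper states and proves that proposition only \emph{after} Theorem~\ref{mar24t1}, so the paper's own proof of the theorem leaves a real gap in the finite-dimensional case, which your write-up closes by proving the needed fact inline (with no circularity, since Proposition~\ref{mar24p1} depends only on the simplex results). Two points you rely on implicitly are standard and fine: located subsets are inhabited in Bishop's sense (needed to apply Theorem~\ref{mar20t2}), and finite-dimensional normed spaces are complete (needed to apply Proposition~\ref{sept16c1}).
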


\begin{proof}
By whichever of Theorem \ref{mar20t1} or Theorem \ref{mar20t2} is applicable
and Lemma \ref{oct03l2},%
\[
-(-K)=(\lnot\lnot K)^{\circ}\subset\left(  \lnot(-K)\right)  ^{\circ}=-(-K),
\]
from which the result follows.%
\hfill
$\square$
\end{proof}

%

\medskip
Let $\left\{  e_{1},\ldots,e_{n}\right\}  $ be the standard basis\footnote{%
\normalfont\sf
For each $k$, $e_{k}$ has $k^{\mathsf{th}}$ component $1$ and all other
components $0$.} of unit vectors in $\mathbf{R}^{n}$, and $u$ the vector in
$\mathbf{R}^{n}$ with each component $1$. Let $\Sigma^{n}$ be the simplex in
$\mathbf{R}^{n}$ with vertices $v_{1},\ldots,v_{n+1}$, where%
\[
v_{n}=\sqrt{1+\frac{1}{n}}e_{i}-n^{-3/2}\left(  \sqrt{n+1}+1\right)
u\ \ \ \ (1\leq i\leq n).
\]

and $v_{n+1}=\frac{1}{\sqrt{n}}u$. According to Wikipedia at

\begin{quote}
{\small https://en.wikipedia.org/wiki/Simplex\#Cartesian\_coordinates\_}

{\small for\_a\_regular\_n-dimensional\_simplex\_in\_Rn}
\end{quote}

%

\noindent
$\Sigma^{n}$ is a regular\footnote{%
\normalfont\sf
We don't actually need the fact that this simplex is regular. A rough
description of regularity is that the simplex is as symmetrical as you can
imagine. If you want to know precisely what a regular simplex is, see under
`simplex' in Wikipedia.} $n$-simplex, with barycentre $0\in\mathbf{R}^{n+1}$
and side length $\sqrt{2(n+1)/n}$, that is inscribed in the closed unit
Euclidean ball of $\mathbf{R}^{n}$. It follows that if $a\in\mathbf{R}^{n}$
and $r>0$, then%
\[
a+r\Sigma^{n}\equiv\left\{  a+rv:v\in\mathbf{\Sigma}^{n}\right\}
\]
is a regular $n$-simplex with barycentre $a$ that is inscribed in the
Euclidean ball $\overline{B}(a,r)$.

Using these facts, we have an interesting alternative proof for the
finite-dimensional case of Corollary \ref{oct07c2}.

\begin{proposition}
\label{mar24p1}If $K$ is a convex subset of a finite-dimensional normed linear
space $X$, then $\overline{K}^{\circ}=K^{\circ}$\textbf{.}
\end{proposition}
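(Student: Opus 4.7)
The inclusion $K^{\circ}\subset\overline{K}^{\circ}$ is immediate. For the reverse inclusion I would exploit the regular simplex construction set up just before the proposition, together with the stability Proposition \ref{sept16c1}, to convert a ball contained in $\overline{K}$ into an honest simplex contained in $K$ whose interior captures the given point.

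More precisely, set $n=\dim X$ (working with the Euclidean norm on $\mathbf{R}^{n}\simeq X$; a routine equivalence-of-norms argument will let us transfer the conclusion to the original norm). Let $y\in\overline{K}^{\circ}$ and pick $s>0$ with $\overline{B}(y,s)\subset\overline{K}$. Consider the translated-and-scaled regular $n$-simplex $y+s\Sigma^{n}$, whose vertices $a_{k}\equiv y+sv_{k}$ lie in $\overline{B}(y,s)\subset\overline{K}$ and whose barycentre (by the construction recalled in the excerpt, and Proposition \ref{sept15p1}) is $y$, which therefore lies in the interior of $y+s\Sigma^{n}$.

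Now invoke Proposition \ref{sept16c1} with $c=y$ to obtain $\delta>0$ such that any points $x_{1},\ldots,x_{n+1}$ with $\Vert x_{k}-a_{k}\Vert<\delta$ span an $n$-simplex whose interior still contains $y$. Since $a_{k}\in\overline{K}$, we can choose such $x_{k}$ in $K$; by convexity, the closed convex hull $\mathrm{conv}(x_{1},\ldots,x_{n+1})$ is contained in $K$, and $y$ lies in its interior. Hence $y\in K^{\circ}$, giving $\overline{K}^{\circ}\subset K^{\circ}$.

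The plan has no real obstacle: the work has been done by Propositions \ref{sept15p1} and \ref{sept16c1}, and the only mild point requiring care is to make sure the application of Proposition \ref{sept16c1} is legitimate, that is, that we may reduce to working with the Euclidean norm on $\mathbf{R}^{n}$ when selecting the inscribed regular simplex; this is standard since in a finite-dimensional space all norms are equivalent, so a Euclidean ball inside $\overline{K}$ still exists after shrinking $s$, and the $\delta$ perturbation translates between norms up to a fixed constant.
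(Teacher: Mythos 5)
Your proof is correct and is essentially the paper's own argument: reduce to the Euclidean norm on $\mathbf{R}^{n}$, inscribe a regular $n$-simplex in a ball contained in $\overline{K}$, and use Proposition \ref{sept16c1} together with the density of $K$ in $\overline{K}$ to replace its vertices by points of $K$. The only difference is cosmetic: you exploit the full conclusion of Proposition \ref{sept16c1} (that $y$ remains \emph{interior} to the perturbed simplex) directly at the single point $y$, whereas the paper uses only hull membership and compensates by running the construction at every point $x$ of a smaller ball $B(\xi,r)$ to conclude $B(\xi,r)\subset K$; both routes are sound, and yours is marginally more direct.
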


\begin{proof}
Again it suffices to prove that $\overline{K}^{\circ}\subset K^{\circ}$ We may
assume that $X=\mathbf{R}^{n}$ with the Euclidean norm. Given $\xi\in
\overline{K}^{\circ}$, choose $r>0$ such that $B(\xi,3r)\subset\overline
{K}^{\circ}$, and consider any $x\in B(\xi,r)$. Referring to the remarks
immediately preceding this proposition, let $a_{1},\ldots,a_{n+1}$ be the
vertices of a regular $n$-simplex $\Sigma$ inscribed in $\overline{B}(x,r)$
and having barycentre $x$. By Proposition \ref{sept16c1}, there exists
$\delta\in\left(  0,r\right)  $ such that if $x_{1},\ldots,x_{n+1}$ are points
of $X$ such that $\left\Vert a_{k}-x_{k}\right\Vert <\delta$ for each $k$,
then $x$ is a convex linear combination of the points $x_{k}$; in that case%
\[
\left\Vert x_{k}-\xi\right\Vert \leq\left\Vert x_{k}-a_{k}\right\Vert
+\left\Vert a_{k}-x\right\Vert +\left\Vert x-\xi\right\Vert <\delta+r+r=3r
\]
for each $k$, so $x_{k}\in\overline{K}$. Since $K$ is dense in $\overline{K}$,
we can ensure that each $x_{k}$ is in $K$. Then $x$ belongs to the convex set
$K$. It follows that $B(\xi,r)\subset K$ and therefore that $\xi\in K^{\circ}%
$. Thus $\overline{K}^{\circ}\subset K^{\circ}$.%
\hfill
$\square$
\end{proof}

\section{Can we do better?}

Finally, we show that Theorems \ref{mar20t1} and \ref{mar20t2} are the best we
can hope for in the constructive setting.

\begin{proposition}
\label{oct03p2}The following are equivalent.

\begin{enumerate}
\item[\emph{(i)}] If $K$ is a convex subset of $\mathbf{R}$ with inhabited
interior, such that $-(-K)$ is located, then $-(-K)=K^{\circ}$.

\item[\emph{(ii)}] The law of excluded middle.

\item[\emph{(iii)}] If $K$ is a convex set with inhabited interior in a normed
space $X$, then $-(-K)=K^{\circ}$.
\end{enumerate}
\end{proposition}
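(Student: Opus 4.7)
My strategy is the cycle (ii)$\Rightarrow$(iii)$\Rightarrow$(i)$\Rightarrow$(ii). The first two implications are short: for (ii)$\Rightarrow$(iii), the law of excluded middle yields $\lnot\lnot K=K$ identically, so Theorem \ref{mar20t1} gives $-(-K)=(\lnot\lnot K)^{\circ}=K^{\circ}$; and for (iii)$\Rightarrow$(i), note that (i) is (iii) specialized to $X=\mathbf{R}$ and burdened with the superfluous extra hypothesis that $-(-K)$ is located.

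The main work is (i)$\Rightarrow$(ii). Given an arbitrary proposition $P$, I would define
$$K_{P}\equiv\{x\in\mathbf{R}:-1<x<1\}\cup\{x\in\mathbf{R}:1\leq x<2\text{ and }P\}.$$
Convexity of $K_P$ follows from distributivity of $\vee$ over $\wedge$ (as in the proof of Proposition \ref{sept30p1}): if $x,y\in K_P$, then $(x<1\wedge y<1)\vee P$, and in either case every $z\in[x,y]$ lies in $K_P$. Since $(-1,1)\subseteq K_P^{\circ}$, the interior is inhabited. Direct inspection gives $1\in K_P^{\circ}\Leftrightarrow P$, while Theorem \ref{mar20t1} together with the intuitionistic identity $\lnot\lnot(A\wedge B)\Leftrightarrow\lnot\lnot A\wedge\lnot\lnot B$ shows $1\in-(-K_P)=(\lnot\lnot K_P)^{\circ}\Leftrightarrow\lnot\lnot P$. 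Provided (i) applies to $K_P$, the equation $-(-K_P)=K_P^{\circ}$ evaluated at the point $1$ forces $\lnot\lnot P\Leftrightarrow P$; applying this with $P\equiv Q\vee\lnot Q$, whose double negation is intuitionistically provable, then yields $Q\vee\lnot Q$ for arbitrary $Q$, which is the law of excluded middle.

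The main obstacle I anticipate is verifying the locatedness hypothesis of (i). A direct computation gives $-(-K_P)=(-1,1)\cup\{x\in[1,2):\lnot\lnot P\}$, and for the test point $x=3/2$ one finds $\rho(x,-(-K_P))=0$ under $\lnot\lnot P$ but $1/2$ under $\lnot P$, so locatedness of $-(-K_P)$ as written is tantamount to a weak omniscience principle for $P$. My plan to sidestep this is to refine the construction: replace the $P$-dependent segment $\{x\in[1,2):P\}$ with a $P$-dependent located convex subset of $[1,2)$ whose endpoints can be computed uniformly in $P$, so that $-(-K_P)$ is manifestly located while the two key equivalences $1\in K_P^{\circ}\Leftrightarrow P$ and $1\in-(-K_P)\Leftrightarrow\lnot\lnot P$ still hold. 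With such a refinement the cycle closes.
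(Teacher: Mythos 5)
Your cycle and two of its implications ((ii)$\Rightarrow$(iii) via Theorem \ref{mar20t1}, and (iii)$\Rightarrow$(i) by specialization) match the paper, and your computations for $K_P$ are all correct. But (i)$\Rightarrow$(ii) has a genuine gap, and you have located it yourself: (i) applies only to sets whose metric double complement is located, and $-(-K_P)=(-1,1)\cup\{x\in[1,2):\lnot\lnot P\}$ is located only at the price of something like $\lnot P\vee\lnot\lnot P$. Your proposed repair --- redesign $K_P$ so that $-(-K_P)$ is ``manifestly located'' uniformly in an \emph{arbitrary} $P$ while keeping $1\in-(-K_P)\Leftrightarrow\lnot\lnot P$ --- cannot be carried out in \textbf{BISH}. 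Any such $-(-K_P)$ would be open, convex, bounded, and contain $(-1,1)$; an inhabited located subset of $\mathbf{R}$ bounded above has a supremum (namely $t-\rho(t,S)$ for any upper bound $t$), and for an open convex $S\supset(-1,1)$ one has $1\in S$ if and only if $\sup S>1$. So your refinement would produce, uniformly in an arbitrary proposition $P$, a real number $b_P$ with $b_P>1\Leftrightarrow\lnot\lnot P$: an encoding of the truth value of an arbitrary proposition into a real number, which is precisely the sort of uniform omniscience that constructive mathematics lacks. The obstacle you met is intrinsic to the strategy of proving the biconditional $P\Leftrightarrow\lnot\lnot P$ for unconstrained $P$, not an artifact of your particular $K_P$.

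The paper's fix is to change what is being proved: do not chase the biconditional for arbitrary $P$, but assume $\lnot\lnot p$ as a standing hypothesis and derive $p$, i.e., establish the double-negation-elimination schema --- which, as you correctly note at the end, yields the law of excluded middle by taking $p\equiv Q\vee\lnot Q$. Under the hypothesis $\lnot\lnot p$, the paper's set $K\equiv[0,1]\cup\{x\in[0,2]:p\}$ satisfies $[0,2]\cap\lnot K=\varnothing$ (a point of $[0,2]$ outside $K$ would witness $\lnot p$), so Lemma \ref{oct15l1} with $S=[0,2]$ gives $-(-K)=(0,2)$ outright: a concrete located set, with no case distinction on $p$. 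Then (i) gives $3/2\in K^{\circ}\subset K$, whence $p$. Your own $K_P$ would serve equally well once $\lnot\lnot P$ is assumed, since then $-(-K_P)=(-1,2)$. The single missing idea is to put $\lnot\lnot P$ in force \emph{before} invoking (i); that is exactly what makes the locatedness hypothesis checkable, and it is all the strength one needs.
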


\begin{proof}
Let $p$ be a proposition such that $\lnot\lnot p$, and define%
\[
K\equiv\left[  0,1\right]  \cup\left\{  x\in\left[  0,2\right]  :p\right\}
\text{.}%
\]
Then $\left[  0,1\right]  \subset K\subset\left[  0,2\right]  $, so $K^{\circ
}$ is inhabited, and $K=\left[  0,2\right]  $ if and only if $p$. If $x\ $and
$y$ belong to $K$, then either both $x$ and $y$ are in $\left[  0,1\right]  $,
or else one of them belongs to $\left\{  x\in\left[  0,2\right]  :p\right\}  $
and $K=\left[  0,2\right]  $; in either case, $\left[  x,y\right]  \subset K$,
so $K$ is convex. If $\left[  0,2\right]  \cap-K$ is inhabited, then $\lnot
p$, which is absurd; so $\left[  0,2\right]  \cap-K=\varnothing$. Since
$\left[  0,2\right]  $ is closed and located in $\mathbf{R}$, Lemma
\ref{oct15l1} shows that $\left(  0,2\right)  =\left[  0,2\right]  ^{\circ
}=-(-K)$, which is therefore located. Moreover, $3/2=-(-K)$, so if (i) holds,
then $3/2\ $must belong to $\left\{  x\in\mathbf{R}:p\right\}  $ and therefore
$p$ holds. It follows that if (i) holds, then $(\lnot\lnot p\Rightarrow p)$
for every proposition $p$; whence (i) implies (ii). Referring to Theorem
\ref{mar20t1}, we see that (ii) implies (iii). It is trivial that (iii)
implies (i).%
\hfill
$\square$
\end{proof}

%

\medskip

Proposition \ref{oct03p2} shows that, working constructively, we cannot remove
`$\lnot\lnot$' from the conclusion of either Theorems \ref{mar20t1} and
\ref{mar20t2}, even under the additional hypothesis that $-(-K)$ is located.%

\medskip
%

\bigskip
%

\noindent
\textbf{Acknowledgement. \ \ }Theorem \ref{mar20t2} arose from discussions
with Hajime Ishihara about thirty years ago.%

\bigskip

%

\bigskip
%

\bigskip
%

\noindent
\textbf{Author's address: \ }School of Mathematics and Statistics, University
of Canterbury, Christchurch 8140, New Zealand%

\noindent
\textbf{email}: dsbridges.math@gmail.com


\begin{thebibliography}{9}                                                                                                %


\bibitem {Bishop}Errett Bishop, \emph{Foundations of Constructive Analysis,
}McGraw-Hill, New York, 1967.

\bibitem {BB}E. Bishop and D.S. Bridges, \emph{Constructive Analysis},
Grundlehren der math. Wissenschaften \textbf{279}, Springer Verlag,
Heidelberg-Berlin-New York, 1985.

\bibitem {DSBsimplex}D.S. Bridges, \emph{Affine Hulls and Simplexes: a
Constructive Analysis}, September 2025; https://doi.org/10.48550/arXiv.2509.20633

\bibitem {BVtech}D.S. Bridges and L.S. V\^{\i}\c{t}\u{a}, \emph{Techniques of
Constructive Analysis}, Universitext, Springer New York, 2006.

\bibitem {Handbook}D.S. Bridges, H. Ishihara, M.J. Rathjen, H. Schwichtenberg
(editors),\emph{\ Handbook of Constructive Mathematics}, Encyclopedia of
Mathematics and Its Applications \textbf{185}, Cambridge University Press, 2023
\end{thebibliography}
\end{document}